\documentclass[11pt]{elsarticle}
\usepackage[utf8]{inputenc}
\usepackage{graphicx}
\graphicspath{ {./images/} }
\setcounter{tocdepth}{2}
\usepackage{mathtools,esint}
\usepackage{wasysym}
\allowdisplaybreaks

\usepackage{srcltx}
\usepackage{eurosym}
\usepackage{amssymb}
\usepackage{amsfonts}
\usepackage[all,arc]{xy}
\usepackage{enumerate}
\usepackage{mathrsfs}
\usepackage{pst-node}
\usepackage{amsmath,amsthm}
\usepackage[T1]{fontenc}
\usepackage[titletoc,toc,page]{appendix}

\usepackage[colorlinks,plainpages=true,pdfpagelabels,hypertexnames=true,colorlinks=true,pdfstartview=FitV,linkcolor=blue,citecolor=green,urlcolor=black,pagebackref]{hyperref}
\PassOptionsToPackage{unicode}{hyperref}
\PassOptionsToPackage{naturalnames}{hyperref}

\newcommand*\bb{\mathbb}

\newcommand *\w{^\wedge}

\newcommand*\de{\partial}









\makeatletter
\newcommand{\vo}{\vec{o}\@ifnextchar{^}{\,}{}}
\makeatother

\def\YYint#1#2#3{{\setbox0=\hbox{$#1{#2#3}{\iint}$}
    \vcenter{\hbox{$#2#3$}}\kern-.50\wd0}}


\def\XXint#1#2#3{{\setbox0=\hbox{$#1{#2#3}{\int}$}
    \vcenter{\hbox{$#2#3$}}\kern-.50\wd0}}

\makeatletter
\def\namedlabel#1#2{\begingroup
   \def\@currentlabel{#2}%
   \label{#1}\endgroup
}
\makeatother
\makeatletter
\newcommand{\rmh}[1]{\mathpalette{\raisem@th{#1}}}
\newcommand{\raisem@th}[3]{\hspace*{-1pt}\raisebox{#1}{$#2#3$}}
\makeatother



\newcommand{\descref}[2]{\hyperref[#1]{\textnormal{\textcolor{black}{(}\textcolor{blue}{\bf #2}\textcolor{black}{)}}}}

\newcommand{\dref}[2]{\hyperref[#1]{\textcolor{black}{(}\textcolor{blue}{\bf #2}\textcolor{black}{)}}}














\newcommand\RR{\mathbb{R}}

\newcommand{\al}{\alpha}

\newcommand{\ep}{\epsilon}


\newcommand{\Om}{\Omega}


\DeclareMathOperator{\spt}{spt}

\DeclareMathOperator*{\esssup}{ess\,sup}
\DeclareMathOperator*{\essinf}{ess\,inf}




\usepackage[ddmmyyyy]{datetime}
\usepackage[margin=2cm]{geometry}
\parskip = 0.00in
\headsep=0.0in
\makeatletter
\g@addto@macro\normalsize{%
  \setlength\abovedisplayskip{2pt}
  \setlength\belowdisplayskip{2pt}
  \setlength\abovedisplayshortskip{4pt}
  \setlength\belowdisplayshortskip{4pt}
}
\numberwithin{equation}{section}
\everymath{\displaystyle}
\usepackage[capitalize,nameinlink]{cleveref}
\crefname{section}{Section}{Sections}
\crefname{subsection}{Subsection}{Subsections}
\crefname{condition}{Condition}{Conditions}
\crefname{hypothesis}{Hypothesis}{Hypothesis}
\crefname{assumption}{Assumption}{Assumptions}
\crefname{lemma}{Lemma}{Lemmas}
\crefname{claim}{Claim}{Claims}
\crefname{remark}{Remark}{Remarks}

\crefformat{equation}{\textup{#2(#1)#3}}
\crefrangeformat{equation}{\textup{#3(#1)#4--#5(#2)#6}}
\crefmultiformat{equation}{\textup{#2(#1)#3}}{ and \textup{#2(#1)#3}}
{, \textup{#2(#1)#3}}{, and \textup{#2(#1)#3}}
\crefrangemultiformat{equation}{\textup{#3(#1)#4--#5(#2)#6}}%
{ and \textup{#3(#1)#4--#5(#2)#6}}{, \textup{#3(#1)#4--#5(#2)#6}}%
{, and \textup{#3(#1)#4--#5(#2)#6}}

\Crefformat{equation}{#2Equation~\textup{(#1)}#3}
\Crefrangeformat{equation}{Equations~\textup{#3(#1)#4--#5(#2)#6}}
\Crefmultiformat{equation}{Equations~\textup{#2(#1)#3}}{ and \textup{#2(#1)#3}}
{, \textup{#2(#1)#3}}{, and \textup{#2(#1)#3}}
\Crefrangemultiformat{equation}{Equations~\textup{#3(#1)#4--#5(#2)#6}}%
{ and \textup{#3(#1)#4--#5(#2)#6}}{, \textup{#3(#1)#4--#5(#2)#6}}%
{, and \textup{#3(#1)#4--#5(#2)#6}}

\crefdefaultlabelformat{#2\textup{#1}#3}

\newtheorem{theorem}{Theorem}[section]
\newtheorem{lemma}[theorem]{Lemma}

\newtheorem{proposition}[theorem]{Proposition}

\newtheorem{definition}[theorem]{Definition}
\newtheorem{remark}[theorem]{Remark}        

\numberwithin{equation}{section}



\usepackage{enumitem}
\newlist{steps}{enumerate}{1}
\setlist[steps, 1]{label = \textcolor{Cerulean}{Step \arabic*:}}
\makeatletter
\def\ps@pprintTitle{%
	\let\@oddhead\@empty
	\let\@evenhead\@empty
	\def\@oddfoot{}%
	\let\@evenfoot\@oddfoot}
\makeatother
\DeclarePairedDelimiterX{\inp}[2]{\langle}{\rangle}{#1, #2}
\newcommand{\norm}[1]{\left\lVert#1\right\rVert}
%
\let\oldint\int
\renewcommand{\int}{\oldint\limits}
\let\oldfint\fint
\renewcommand{\fint}{\oldfint\limits}
\let\oldiint\iint
\renewcommand{\iint}{\oldiint\limits}

\let\oldiiint\iiint
\renewcommand{\iiint}{\oldiiint\limits}
\usepackage{doi}
\bibliographystyle{plainnat}
\definecolor{aorta}{rgb}{0.0, 0.5, 0.0}
\definecolor{darklavender}{rgb}{0.45, 0.31, 0.59}
\AtBeginDocument{%
\hypersetup{citecolor=red}}

\begin{document}

\begin{frontmatter}
\title{On The Weak Harnack Estimate For Nonlocal Equations}
\author[myaddress]{Harsh Prasad\tnoteref{thanks}}
\ead{harsh@tifrbng.res.in}

\tnotetext[thanks]{Supported by the Department of Atomic Energy,  Government of India, under	project no.  12-R\&D-TFR-5.01-0520}

\address[myaddress]{Tata Institute of Fundamental Research, Centre for Applicable Mathematics, Bangalore, Karnataka, 560065, India}

\begin{abstract}
We prove a weak Harnack estimate for a class of doubly nonlinear nonlocal equations modelled on the nonlocal Trudinger equation 
\begin{align*}
	\de_t(|u|^{p-2}u) + (-\Delta_p)^s u = 0
\end{align*} 
for $p\in (1,\infty)$ and $s \in (0,1)$. Our proof relies on expansion of positivity arguments developed by DiBenedetto, Gianazza and Vespri adapted to the nonlocal setup. Even in the linear case of the nonlocal heat equation and in the time-independent case of fractional $p-$Laplace equation, our approach provides an alternate route to Harnack estimates without using Moser iteration, log estimates or Krylov-Safanov covering arguments. 
\end{abstract}
    \begin{keyword} Doubly nonlinear parabolic equation, fractional p-Laplace, expansion of positivity, weak Harnack inequality.
    \MSC[2020] 35B65, 35R11, 47G20, 35K65, 35K67. 35K92.  
    \end{keyword}

\end{frontmatter}
\tableofcontents

\section{Introduction}
\subsection{The Problem}
We prove weak Harnack estimates for positive super-solutions to the nonlocal Trudinger equation  
\begin{equation}\label{eq:main-eq}
    \de_t(|u|^{p-2}u) - Lu = 0 \text{ weakly in } E_T
\end{equation}
where $E_T = E \times (-T,T]$ for some open set $E \subset \bb{R}^N$ and some $T> 0$; $L$ is a nonlocal operator modelled on the fractional $p-$Laplacian given by
\begin{equation}
    Lu(x,t) = \text{P.V.}\int\limits_{\RR^N} K(x,y,t)|u(x,t)-u(y,t)|^{p-2}(u(x,t)-u(y,t))\,dy 
\end{equation}
where the kernel $K:(-T,T]\times\bb{R}^N\times\bb{R}^N \rightarrow [0,\infty)$ is a measurable function satisfying 
\begin{equation}
    K(x,y,t) = K(y,x,t)
\end{equation}
and 
\begin{equation}
    \lambda|x-y|^{-N-sp} \leq K(x,y,t) \leq \Lambda |x-y|^{-N-sp}
\end{equation}
for some $0<\lambda \leq \Lambda$, $p \in (1,\infty)$ and $s\in (0,1)$. The general equation is modelled after the nonlocal Trudinger equation
\begin{equation}    
  \de_t(|u|^{p-2}u) + (-\Delta_p)^s u = 0. 
\end{equation}
In particular for the case $p=2$, we get the nonlocal heat equation and in the time-independent case we get the fractional $p-$Laplacian. 
\subsection{Context}
For the linear setup where $p=2$, the weak Harnack was established in \cite{Felsinger2013} using Moser iteration and log estimates for globally non-negative solutions. It was extended to locally non-negative solutions with a tail contribution in \cite{Strmqvist2019} again using Moser iteration and log estimates. However, the results in \cite{Strmqvist2019} were applicable only to global solutions. A Harnack for local solutions was obtained in \cite{kassmann2023} as a consequence of the weak Harnack proved in \cite{Felsinger2013} and an improved boundedness estimate with an $L^1$ tail term. \\
As we can see, the various proofs of weak Harnack estimate rely on Moser iteration and log estimates. In fact, even in the elliptic case \cite{DiCastro2014}, the proof relies on log estimates and Krylov-Safanov covering arguments. The proof we present here is based on the expansion of positivity method pioneered in \cite{actaharnack,dukeharnack,anallisnsharnack,DiBenedetto2012} adapted to the nonlocal setting. It 
works for the doubly nonlinear nonlocal equation \cref{eq:main-eq}, the nonlocal heat equation and in the time independent case of the fractional $p-$Laplace equation equally well without the use of Moser iteration, log estimates or any form of covering arguments. It serves as a proof of concept to a more robust approach to proving Harnack estimates which can handle nonlinear nonlocal parabolic equations. \\
We now discuss some prior developments below. 
\subsubsection{Local Case}
The local Trudinger equation 
\begin{equation}\label{eq:local}
\de_t(|u|^{p-2}u) - \Delta_p u = 0
\end{equation}
is an example of a general class of doubly nonlinear parabolic equations which arise in the study of shallow water flows, glacier dynamics and gas flow under friction among other physical models \cite{Mahaffy1976,ALONSO2008,Leugering2018}.
From the point of view of De Giorgi-Nash-Moser theory \cref{eq:local} serves as an important prototype for equations without translation invariance. Harnack estimates were proved in \cite{Trudinger1968,gianazza2006harnack,Kinnunen2006} and H\"older regularity was proved in \cite{Kuusi2012,Kuusi2011,Bgelein2021}. Other wrorks of interest in this direction are \cite{ivanov1992quasilinear,ivanov1994holder,ivanov1991regularity,vespri1992local,vespri2022extensive,liaopart2,liaopart3}.
\subsubsection{Nonlocal Case} 
The nonlocal term models many physical processes such as surface diffusion, fracture dynamics in the plane and phase transition with long range interactions \cite{lions1972inequations,giacomin1998deterministic,Caffarelli2012}. The De Giorgi-Nash-Moser theory was extended to nonlocal elliptic equations in \cite{Kassmann2008,dyda2020regularity,DiCastro2014,DiCastro2016,Cozzi2017} whereas for nonlocal parabolic equations it was extended in \cite{caffarelli2011regularity,Felsinger2013,Strmqvist2019,Strmqvist2019bdd}

As in the local case, \cref{eq:main-eq} is an important prototype of a nonlocal equation without translation invariance. For the linear setup where $p=2$, the Harnack estimate was established in \cite{Felsinger2013,Strmqvist2019,kassmann2023}.  For the nonlocal Trudinger equation \cite{Banerjee2021} provides local boundedness for positive sub-solutions and reverse H\"older estimates for positive super-solutions. 
\subsection{Notation and Definition}
\subsubsection{Notation}
We begin by collecting the standard notation that will be used throughout the paper:
\begin{itemize}
\item  $N$ refers to the space dimension. 
\item  $\partial_t f$ refers to the time derivative of $f$.
\item  $E$ denotes a fixed open set in $\mathbb{R}^N$ with boundary $\partial E$. 
\item For $T>0$,  let $E_T\coloneqq E\times (-T,T]$. This will be our reference domain.
\item The parabolic boundary $\de_p E_{T} \coloneqq \de E_T - \Bar{E}\times\{T\}$. We define the parabolic boundary for general open sets in a similar manner. 
\item For $\rho>0$ and $x_0 \in \bb{R}^N$ we define 
\begin{align*}
	&K_{\rho}(x_0)=\{x\in\RR^N:|x-x_0|<\rho\}.
\end{align*} 
When $x_0 = 0$ we write $K_{\rho}(0)$ as $K_{\rho}$. 
\item For a scaling parameter $\theta >0$ we write
\[
(x_0,t_0) + Q_{\rho}(\theta) = K_{\rho}(x_0)\times(t_0-\theta\rho^{sp},t_0].
\]
\item Integration with respect to either space or time only will be denoted by a single integral $\int$ whereas integration on $\Om\times\Om$ or $\RR^N\times\RR^N$ will be denoted by a double integral $\iint$.
\item  A universal constant is a positive constant which depends only on the dimension $N$, the integrability $p$, the differentiability $s$, and the ellipticity constants $\lambda$ and $\Lambda$. 
\end{itemize}

\subsubsection{Function spaces}
Let $1<p<\infty$, we denote by $p'=p/(p-1)$ the conjugate exponent of $p$. Let $\Om$ be an open subset of $\RR^N$. We define the {\it Sobolev-Slobodeki\u i} space, which is the fractional analogue of Sobolev spaces.
\begin{align*}
    W^{s,p}(\Om)=\left\{ \psi\in L^p(\Omega): [\psi]_{W^{s,p}(\Om)}<\infty \right\}, s\in (0,1),
\end{align*} where the seminorm $[\cdot]_{W^{s,p}(\Om)}$ is defined by 
\begin{align*}
    [\psi]_{W^{s,p}(\Om)}=\left(\iint\limits_{\Om\times\Om} \frac{|\psi(x)-\psi(y)|^p}{|x-y|^{N+ps}}\,dx\,dy\right)^{\frac 1p}.
\end{align*}
The space when endowed with the norm $\norm{\psi}_{W^{s,p}(\Om)}=\norm{\psi}_{L^p(\Om)}+[\psi]_{W^{s,p}(\Om)}$ becomes a Banach space. The space $W^{s,p}_0(\Om)$ is the subspace of $W^{s,p}(\RR^N)$ consisting of functions that vanish outside $\Om$. We also define the tail space as 
\begin{align}\label{tailspace}
    L^m_{\alpha}(\RR^N):=\left\{ v\in L^m_{\text{loc}}(\RR^N):\int\limits_{\RR^N}\frac{|v(x)|^m}{1+|x|^{N+\alpha}}\,dx<+\infty \right\},\,m>0,\,\alpha>0.
\end{align}

\subsubsection{Definition of weak solution}
We work with the following notion of a solution. 
\begin{definition}
    A function $u\in L_{\text{loc}}^p(-T,T;W^{s,p}_{\text{loc}}(E))\cap L_{\text{loc}}^\infty(-T,T;L^2_{\text{loc}}(E))\cap L^\infty(-T,T;L^{p-1}_{sp}(\RR^N))$ is said to be a weak sub(super)-solution to \cref{eq:main-eq} if for every compact subset $\mathcal{K} \subset E$ and every closed subinterval $[t_1,t_2]\subset (-T,T]$, it holds that
    \begin{align*}
        \int\limits_{\mathcal{K}} |u(x,t)|^{p-2}u(x,t)\phi(x,t)\,dx\Big|_{t_1}^{t_2} &- \int_{t_1}^{t_2}\int\limits_{\mathcal{K}} |u(x,t)|^{p-2}u(x,t)\partial_t\phi(x,t)\,dx\,dt\\
        &+\int_{t_1}^{t_2}\iint\limits_{\bb{R}^N\times\bb{R}^N}\,K(x,y,t)|u(x,t)-u(y,t)|^{p-2}(u(x,t)-u(y,t))(\phi(x,t)-\phi(y,t))\,dy\,dx\,dt\\
        &\leq(\geq) 0,
    \end{align*} for all non-negative test functions $\phi\in L_{\text{loc}}^p(I,W_0^{s,p}(\mathcal{K}))\cap W_{\text{loc}}^{1,p}(I,L^p(\mathcal{K}))$. 
\end{definition}
\subsection{Main Theorem}
We state our main theorem below. 
\begin{theorem}
    Let $u$ be a globally non-negative weak supersolution to \cref{eq:main-eq}. Then there exists a scale $\delta_0 \in (0,1)$ an integrability exponent $0<q<1$ and a constant $H >1$ all depending only on the data such that
    \[
         \left(\fint_{K_{\rho}(y)}u^q(\cdot,\tau)\,dx\right)^{\frac{1}{q}} \leq H \text{  } \underset{K_{2\rho(y)}}{\essinf} u(\cdot,t)  \quad \text{for all times} \quad t \in \left(\tau + \delta_0(\rho/2)^{sp},\tau + \delta_0\rho^{sp}\right).
    \] 
\end{theorem}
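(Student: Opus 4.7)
The plan is to implement the DiBenedetto--Gianazza--Vespri expansion of positivity programme in the nonlocal setting. The argument splits into a \emph{measure} side (an $L^q$ mass produces a set of large measure on which $u$ is bounded below at a fixed time), a \emph{De Giorgi} side (this measure-theoretic information propagates to a pointwise lower bound on a smaller set), and an \emph{expansion of positivity} side (this pointwise bound spreads both in space from $K_{\rho/2}(y)$ to $K_{2\rho}(y)$ and in time over an intrinsic interval). Since the doubly nonlinear structure $\partial_t(|u|^{p-2}u)$ makes the equation homogeneous of degree zero in $u$, the natural time scale is $\delta_0 \rho^{sp}$ with no intrinsic rescaling in the level of $u$, which is exactly the scale appearing in the statement.

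\textbf{Step 1: Caccioppoli estimates.} First I would test the weak formulation with $\phi = (k-u)_+\, \zeta^p$ for a suitable cutoff $\zeta$ supported in $K_\rho(y)$, obtaining a Caccioppoli-type inequality for $(u-k)_-$. The nonlocal term splits into a ``good'' piece supported on $K_\rho \times K_\rho$ yielding the Gagliardo seminorm of $(u-k)_-$, and two ``tail'' pieces controlled by
\[
\sup_{t}\,\int_{\bb{R}^N \setminus K_\rho(y)}\frac{u_-^{p-1}(x,t)}{|x-y|^{N+sp}}\,dx,
\]
which vanishes under the global nonnegativity hypothesis; this is the only place global nonnegativity is used decisively. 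The time term produces an $L^\infty$-in-time, $L^p$-in-space control of $(u-k)_-$, exploiting the monotonicity of $v \mapsto |v|^{p-2}v$ so that integration against $\partial_t(|u|^{p-2}u)$ yields the desired level-set sup bound.

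\textbf{Step 2: De Giorgi-type measure-shrinking lemma and expansion of positivity.} From the Caccioppoli inequality I would run a De Giorgi iteration on cylinders $(y,t_1) + Q_\rho(\delta_0)$ to prove: if $u(\cdot,t_1) \geq M$ on a fraction at least $\alpha \in (0,1)$ of $K_\rho(y)$, then there exist $\eta, \delta_0, \sigma > 0$ (depending on $\alpha$ and the data) with $u \geq \eta M$ a.e.\ on $K_{\rho/2}(y) \times (t_1 + \sigma \rho^{sp}, t_1 + \delta_0 \rho^{sp})$. Chaining this estimate along a geometric covering of $K_{2\rho}(y)$ by balls of radius $\rho/2$, each application consuming a fixed portion of the time interval $\delta_0 \rho^{sp}$ and losing a fixed factor in the constant, yields the full expansion of positivity: $u(\cdot,t) \geq \bar\eta M$ on $K_{2\rho}(y)$ for all $t \in (t_1 + \sigma \rho^{sp}, t_1 + \delta_0 \rho^{sp})$, up to relabelling $\delta_0$ and $\sigma$.

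\textbf{Step 3: Assembly of the weak Harnack.} To close the argument, I would apply Chebyshev's inequality to the $L^q$ average at time $\tau$: choosing $M \asymp \bigl(\fint_{K_\rho(y)} u^q(\cdot,\tau)\bigr)^{1/q}$, the set $\{u(\cdot,\tau) \geq M/2\} \cap K_\rho(y)$ has measure at least a fixed fraction of $|K_\rho|$ provided $q$ is chosen small enough (of the order inferred from the Caccioppoli constants in Step 2, in the spirit of the Krylov--Safonov-free DBGV approach). Feeding this measure bound into the expansion of positivity of Step 2 produces the lower bound $u(\cdot,t) \geq (1/H)\bigl(\fint u^q\bigr)^{1/q}$ on $K_{2\rho}(y)$ for $t$ in the stated window, which is exactly the claim.

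\textbf{Main obstacle.} The most delicate point will be the tail bookkeeping inside the De Giorgi iteration of Step 2: each truncation level $k_n \to \eta M$ produces its own tail contribution, and these must be prevented from accumulating as $n \to \infty$. Global nonnegativity zeroes out the $u_-$ tails, but the positive-part tails from $(u-k_n)_-$ still produce terms of order $k_n^{p-1}\rho^{-sp}$ that must be reabsorbed into the main quadratic term by a careful geometric choice of $k_n$. Handling this without Moser iteration or logarithmic estimates is precisely the contribution advertised in the introduction, and propagating the constants cleanly through the chaining in Step 2 so that the final constants $H$, $q$, $\delta_0$ depend only on the data is where the argument demands the most care.
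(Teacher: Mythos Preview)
Your Step 3 contains a genuine error. Chebyshev's inequality goes the wrong way: from $\fint_{K_\rho} u^q = A^q$ you get an \emph{upper} bound on $|\{u > M\}|$, not a lower one. In fact no choice of $q$ rescues this: take $u = N^{1/q}$ on a set of measure $|K_\rho|/N$ and $u=0$ elsewhere; then $\fint u^q = 1$ while $|\{u \geq 1/2\}|/|K_\rho| = 1/N$ is arbitrarily small. So you cannot extract a single level $M$ and a fixed fraction $\alpha$ from the $L^q$ mass. The paper instead runs the layer-cake representation
\[
\fint_{K_\rho} u^q(\cdot,\tau)\,dx = q\int_0^\infty M^{q-1}\,\frac{|\{u(\cdot,\tau)>M\}\cap K_\rho|}{|K_\rho|}\,dM
\]
and, for \emph{every} $M>\lambda_0:=\essinf_{K_{2\rho}\times(\tau+\delta_0(\rho/2)^{sp},\tau+\delta_0\rho^{sp}]} u$, uses the expansion of positivity in reverse to bound the distribution function by $(\lambda_0/(\eta_0 M))^{1/d}$; the integral then converges precisely for $q<1/d$.

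This exposes a second gap, in Step 2: for the layer-cake argument to work, the time window $(\tau+\delta_0(\rho/2)^{sp},\tau+\delta_0\rho^{sp}]$ must be the \emph{same} for every level $M$, i.e.\ $\delta_0$ must be independent of the measure fraction $\alpha$. Your expansion of positivity produces $\delta_0=\delta_0(\alpha)$, which is also what the paper gets at first (Proposition~4.1, with $\delta\sim\alpha^{p+1}$). The decisive additional ingredient is a clustering lemma (a quantitative Severini--Egorov theorem for $W^{s,p}$ functions) that finds, inside $K_\rho$, a small sub-ball of radius $\sigma\rho$ with $\sigma\sim\alpha^{3p+2}$ on which the measure density is at least $1/2$; one then iterates the expansion of positivity from this sub-ball with the fixed parameter $\alpha=1/2$, absorbing all $\alpha$-dependence into the lower bound $\eta_0\alpha^d M$ and leaving $\delta_0$ universal. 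Your outline omits this step entirely, and without it the final constants cannot be made to depend only on the data.
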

\section{Auxiliary Results}
\subsection{Technical Lemmas}
For $k, w\in\bb{R}$ we define 
\begin{equation*}
	\mathfrak g_\pm (w,k)=\pm (p-1)\int_{k}^{w}|s|^{p-2}(s-k)_\pm\,ds.
\end{equation*}
Note that $\mathfrak g_\pm (w,k)\ge 0$.
The following lemma can be found in Lemma 2.2 \cite[Lemma 2.2]{ACERBI1989115} for $0<\al<1$ and in  in \cite[estimate (2.4)]{Giaquinta1986} for $\al > 1$. 
\begin{lemma}\label{lem:fusco}
    For any $\alpha>0$ there exists a constant $C=C(\al)$ such that for every $a,b \in \bb{R}$ we have
    \[
    \frac{1}{C}||b|^{\al-1}b-|a|^{\al-1}a| \leq (|a|+|b|)^{\al-1}|b-a| \leq C||b|^{\al-1}b-|a|^{\al-1}a|
    \]
    where we set $|c|^{\al-1}c = 0$ for $c=0$. 
\end{lemma}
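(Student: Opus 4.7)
The plan is a compactness plus homogeneity argument. Observe that both quantities
\[
F(a,b) := \bigl||b|^{\alpha-1}b-|a|^{\alpha-1}a\bigr| \qquad \text{and} \qquad H(a,b) := (|a|+|b|)^{\alpha-1}|b-a|
\]
are positively homogeneous of degree $\alpha$ in $(a,b)$: replacing $(a,b)$ by $(\lambda a,\lambda b)$ for $\lambda > 0$ multiplies each by $\lambda^{\alpha}$. Since the inequality to be proved is trivial when $a=b=0$, I may normalize by $|a|+|b|=1$ and restrict attention to the compact set $S := \{(a,b)\in\RR^2 : |a|+|b|=1\}$, on which $H$ reduces to $|b-a|$. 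The claim then becomes: there exists $C=C(\alpha)$ such that $C^{-1}F \leq |b-a| \leq C F$ on $S$.

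Next I identify the common zero set. Clearly $|b-a|=0$ iff $a=b$; and since $t\mapsto |t|^{\alpha-1}t$ is a strictly increasing odd function on $\RR$ (for any $\alpha>0$), $F(a,b)=0$ iff $a=b$ as well. On $S$ the condition $a=b$ forces $a=b=\pm\tfrac12$. By the two symmetries $(a,b)\mapsto(-a,-b)$ (under which both $F$ and $H$ are invariant) and $(a,b)\mapsto(b,a)$, it suffices to analyze one neighborhood, say that of $(\tfrac12,\tfrac12)$, where I parametrize $(a,b)=(\tfrac12-\eps,\tfrac12+\eps)$ with $\eps$ small. Here both entries are bounded away from $0$, so $t\mapsto |t|^{\alpha-1}t$ is smooth in a neighborhood, and a Taylor expansion yields
\[
F = (\tfrac12+\eps)^{\alpha}-(\tfrac12-\eps)^{\alpha} = 2\alpha\,(\tfrac12)^{\alpha-1}\eps + O(\eps^{3}), \qquad |b-a|=2\eps,
\]
so $F/|b-a|\to\alpha\,(\tfrac12)^{\alpha-1}>0$ as $\eps\to 0$. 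Thus $F/|b-a|$ and its reciprocal extend to strictly positive continuous functions on $S$.

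Being continuous and positive on the compact set $S$, both ratios are bounded above by some constant $C=C(\alpha)$, which is exactly the claim after undoing the normalization. The only technical subtlety is at the singular locus $a=b=\pm\tfrac12$, and the key observation that avoids any delicate treatment of the endpoint $\alpha<1$ (where the derivative of $|t|^{\alpha-1}t$ blows up at $0$) is that these degeneracies occur away from $t=0$, so a one-variable Taylor expansion on a compact subinterval of $(0,\infty)$ suffices. The main obstacle, such as it is, is simply verifying that $F$ does not vanish except when $a=b$; this is handled by the strict monotonicity of $t\mapsto |t|^{\alpha-1}t$. The assertion now follows by homogeneity for arbitrary $(a,b)\neq(0,0)$.
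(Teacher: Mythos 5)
Your argument is correct. Note first that the paper does not actually prove this lemma: it is quoted from the literature ([ACERBI1989115, Lemma 2.2] for $0<\alpha<1$ and [Giaquinta1986, estimate (2.4)] for $\alpha>1$), so any complete proof is "different from the paper's" by default. The classical route in those references is quantitative: one writes $|b|^{\alpha-1}b-|a|^{\alpha-1}a=\alpha\,(b-a)\int_0^1|a+t(b-a)|^{\alpha-1}\,dt$ and then shows by a direct case analysis that $\int_0^1|a+t(b-a)|^{\alpha-1}\,dt$ is comparable to $(|a|+|b|)^{\alpha-1}$, which yields an explicit constant $C(\alpha)$. Your proof instead exploits that both sides are positively homogeneous of degree $\alpha$, normalizes to the compact set $\{|a|+|b|=1\}$, identifies the common zero set $\{a=b\}$ (using strict monotonicity of $t\mapsto|t|^{\alpha-1}t$), and checks via a one-variable expansion near $a=b=\pm\tfrac12$ (where the entries are bounded away from $0$, so no issue with $\alpha<1$) that the ratio of the two sides extends to a positive continuous function; compactness then gives two-sided bounds, and homogeneity removes the normalization. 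The key points are all in order: the parametrization $(\tfrac12-\varepsilon,\tfrac12+\varepsilon)$ does cover the normalized set near the singular points, the limit $\alpha(\tfrac12)^{\alpha-1}$ is positive, and the symmetries $(a,b)\mapsto(-a,-b)$, $(a,b)\mapsto(b,a)$ reduce to that one neighborhood (a mean value theorem step would even replace the Taylor expansion). The trade-off is that your compactness argument is soft and produces no explicit $C(\alpha)$, while the integral-representation proof does; since the lemma only asserts existence of $C(\alpha)$, your version fully suffices for its role in the paper. A cosmetic remark: for $\varepsilon<0$ the quantity $(\tfrac12+\varepsilon)^{\alpha}-(\tfrac12-\varepsilon)^{\alpha}$ is negative, so strictly one should keep the absolute value, which changes nothing in the limit of the ratio.
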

Using \cref{lem:fusco} we get the following as in \cite[Lemma 2.2]{Bgelein2021}
\begin{lemma}\label{lem:g}
There exists a constant $\boldsymbol\gamma=\boldsymbol\gamma(p)$ such that,
for all $w,k\in\bb{R}$, the following inequality holds true:
\begin{align*}
	\tfrac1{\boldsymbol\gamma} \big(|w| + |k|\big)^{p-2}(w-k)_\pm^2
	\le
	\mathfrak g_\pm (w,k)
	\le
	\boldsymbol\gamma \big(|w| + |k|\big)^{p-2}(w-k)_\pm^2
\end{align*}
\end{lemma}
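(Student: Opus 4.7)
The plan is to reduce the two-sided bound to \cref{lem:fusco} via a Fubini rewrite, then finish with elementary estimates on a scalar integral.

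\textbf{Step 1 (Reduction to $\mathfrak g_+$ with $w>k$).} The substitution $(w,k)\mapsto(-w,-k)$ preserves $|w|+|k|$ and exchanges $(w-k)_+$ with $(w-k)_-$ and $\mathfrak g_+$ with $\mathfrak g_-$, so it suffices to prove the bound for $\mathfrak g_+$. If $w\le k$, the integrand $(s-k)_+$ vanishes on $[w,k]$ and $(w-k)_+=0$, so both sides are zero. Assume $w>k$ from now on.

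\textbf{Step 2 (Closed-form rewrite).} Writing $(s-k)=\int_k^s d\tau$ and using Fubini, together with the antiderivative $(p-1)\int |s|^{p-2}\,ds = |s|^{p-2}s$, one obtains
\begin{equation*}
    \mathfrak g_+(w,k) \;=\; (p-1)\int_k^w |s|^{p-2}(s-k)\,ds \;=\; \int_k^w\bigl(|w|^{p-2}w - |\tau|^{p-2}\tau\bigr)\,d\tau.
\end{equation*}

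\textbf{Step 3 (Pointwise use of \cref{lem:fusco}).} Since $x\mapsto |x|^{p-2}x$ is strictly increasing on $\RR$, the integrand above is pointwise positive for $\tau\in[k,w)$. Applying \cref{lem:fusco} with $\alpha = p-1$, $a=\tau$, $b=w$ yields constants $c_1(p),c_2(p)>0$ with
\begin{equation*}
    c_1(p)\,(|w|+|\tau|)^{p-2}(w-\tau) \;\le\; |w|^{p-2}w-|\tau|^{p-2}\tau \;\le\; c_2(p)\,(|w|+|\tau|)^{p-2}(w-\tau).
\end{equation*}
So the lemma reduces to showing
\begin{equation*}
    \int_k^w (|w|+|\tau|)^{p-2}(w-\tau)\,d\tau \;\asymp_p\; (|w|+|k|)^{p-2}(w-k)^2.
\end{equation*}

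\textbf{Step 4 (Scalar integral estimate).} For $p\ge 2$, the upper bound is immediate from $|\tau|\le|w|+|k|$, hence $|w|+|\tau|\le 2(|w|+|k|)$, giving at most $2^{p-3}(|w|+|k|)^{p-2}(w-k)^2$. For the lower bound, restrict to $\tau$ in a subinterval of length $\gtrsim (w-k)$ on which $|w|+|\tau|\gtrsim |w|+|k|$ (e.g.\ the half of $[k,w]$ farther from $0$). For $1<p<2$, the monotonicity reverses and I would split on $\operatorname{sgn}(wk)$: when $wk\ge 0$, $|\tau|$ lies between $\min(|w|,|k|)$ and $\max(|w|,|k|)$ and the upper bound is immediate from $|w|+|\tau|\ge |w|+|k|/\text{const}$; when $k<0<w$, I split $[k,w]$ at $0$ and compute the two antiderivatives explicitly, verifying that the integrable singularity of $|\tau|^{p-2}$ at the origin contributes only $\lesssim \max(|w|,|k|)^p$, which is comparable to $(|w|+|k|)^{p-2}(w-k)^2$.

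\textbf{Main obstacle.} The only delicate point is Step~4 in the sub-quadratic range $1<p<2$ with $\operatorname{sgn}(w)\neq\operatorname{sgn}(k)$: here $(|w|+|\tau|)^{p-2}$ blows up near $\tau=0$, and one must check by direct computation that this (integrable) singularity does not destroy the two-sided comparison. All other sub-cases reduce to monotonicity of $x\mapsto x^{p-2}$. This computation is precisely the content of \cite[Lemma~2.2]{Bgelein2021} that the statement references.
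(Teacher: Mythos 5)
Your route is essentially the one the paper itself takes: the paper gives no independent argument, it simply invokes \cref{lem:fusco} and cites \cite[Lemma 2.2]{Bgelein2021}, and your Steps 1--3 (the symmetry reduction, the Fubini identity $\mathfrak g_+(w,k)=\int_k^w\bigl(|w|^{p-2}w-|\tau|^{p-2}\tau\bigr)\,d\tau$, and the pointwise application of \cref{lem:fusco} with $\alpha=p-1$, $a=\tau$, $b=w$) are correct and reduce the lemma to the true elementary comparison $\int_k^w(|w|+|\tau|)^{p-2}(w-\tau)\,d\tau\asymp_p(|w|+|k|)^{p-2}(w-k)^2$.

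The one genuine slip is inside Step 4, and it is not where you locate the difficulty. For $1<p<2$ and $wk\ge 0$ your claimed pointwise bound $|w|+|\tau|\ge(|w|+|k|)/\mathrm{const}$ is false when $k<w\le 0$ with $|k|\gg|w|$: take $k=-A$, $w=-1$ and $\tau$ near $-1$, where $|w|+|\tau|\approx 2$ while $|w|+|k|\approx A$; so the upper bound is not ``immediate'' there. The fix is a split on the length of the interval rather than on signs. If $w-k\ge\tfrac14(|w|+|k|)$, use $|w|+|\tau|\ge w-\tau$ and $p-2<0$ to get $\int_k^w(|w|+|\tau|)^{p-2}(w-\tau)\,d\tau\le\int_k^w(w-\tau)^{p-1}\,d\tau=\tfrac1p(w-k)^p\le\tfrac{4^{2-p}}{p}(|w|+|k|)^{p-2}(w-k)^2$. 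If $w-k\le\tfrac14(|w|+|k|)$, then every $\tau\in[k,w]$ satisfies $|\tau|\ge\max(|w|,|k|)-(w-k)\ge\tfrac14(|w|+|k|)$, so the pointwise bound does hold with constant $4^{2-p}$. This covers all sign configurations at once, and in particular the ``singularity at $\tau=0$'' you flag as the main obstacle never arises after Step 3, since $|w|+|\tau|\ge|w|$ (and if $w=0$ the weight $|\tau|^{p-2}$ is integrable and the explicit computation gives $\asymp|k|^p$). Finally, for the lower bound when $p\ge2$, your ``half farther from the origin'' works but needs a short case check; it is cleaner to integrate over the left half $\bigl[k,\tfrac{k+w}2\bigr]$, where $w-\tau\ge\tfrac12(w-k)$ and $|w|+|k|\le|w|+|\tau|+(w-k)\le 3(|w|+|\tau|)$, which yields the claim with constant $3^{p-2}\cdot 4$. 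With these repairs your argument is a complete, self-contained proof along the same lines as the cited reference.
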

Next, we have the following Sobolov embedding result from \cite[Lemma 2.3]{Ding2021}
\begin{lemma}\label{fracpoin}
    Let $t_2>t_1>0$ and suppose $s\in(0,1)$ and $1\leq p<\infty$. Then for any $f\in L^p(t_1,t_2;W^{s,p}(B_r))\cap L^\infty(t_1,t_2;L^2(B_r))$, we have
    \begin{equation*}
    	\begin{array}{rcl}
        \int_{t_1}^{t_2}\int_{B_r}|f(x,t)|^{p\left(1+\frac{2s}{N}\right)}\,dx\,dt
        & \leq &  C(N,s,p)\left(r^{sp}\int_{t_1}^{t_2}\int_{B_r}\int_{B_r}\frac{|f(x,t)-f(y,t)|^p}{|x-y|^{N+sp}}\,dx\,dy\,dt+\int_{t_1}^{t_2}\int_{B_r}|f(x,t)|^p\,dx\,dt\right) \\
        &&\quad \times\left(\sup_{t_1<t<t_2}\fint_{B_r}|f(x,t)|^2\,dx\right)^{\frac{sp}{N}}.
        \end{array}
    \end{equation*}  
\end{lemma}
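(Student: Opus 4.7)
The plan is to establish the inequality as a parabolic Gagliardo--Nirenberg estimate, combining a pointwise-in-time spatial fractional Sobolev embedding with a Hölder interpolation that trades integrability against the $L^\infty_t L^2_x$ reservoir carried by the hypothesis $f\in L^\infty(t_1,t_2;L^2(B_r))$. I work explicitly in the subcritical regime $sp<N$, where $p^*_s:=Np/(N-sp)\in(p,\infty)$; the borderline case needs only a small perturbation of $s$, as I note at the end.

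First, for a.e.\ $t\in(t_1,t_2)$ I invoke the elliptic fractional Sobolev--Poincaré inequality on the ball,
\[
\|f(\cdot,t)\|_{L^{p^*_s}(B_r)}^p \leq C(N,s,p)\Big([f(\cdot,t)]_{W^{s,p}(B_r)}^p + r^{-sp}\|f(\cdot,t)\|_{L^p(B_r)}^p\Big),
\]
where the $r^{-sp}$ normalisation is forced by homogeneity under the rescaling $y\mapsto y/r$. Next, I use the algebraic identity $p(1+2s/N)=p+2sp/N$ to split $|f|^{p(1+2s/N)}=|f|^{p}\cdot|f|^{2sp/N}$, and apply Hölder in $x$ with the dual exponents $\bigl(\tfrac{N}{N-sp},\tfrac{N}{sp}\bigr)$:
\[
\int_{B_r}|f(\cdot,t)|^{p(1+2s/N)}\,dx \leq \|f(\cdot,t)\|_{L^{p^*_s}(B_r)}^{p}\Big(\int_{B_r}|f(\cdot,t)|^2\,dx\Big)^{sp/N},
\]
the key numerical coincidence being $p^*_s\cdot(N-sp)/N=p$, so that the first factor is \emph{exactly} the $L^{p^*_s}$-norm raised to the $p$-th power.

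Finally I integrate over $t\in(t_1,t_2)$, pull the spatial $L^2$ integral out as the essential supremum, substitute the Sobolev estimate from the first step inside the remaining time integral, and rewrite the $L^2$ sup via averages: $\sup_t\int_{B_r}|f|^2 = |B_r|\sup_t\fint_{B_r}|f|^2 = c(N)\,r^N\,\sup_t\fint_{B_r}|f|^2$. The factor $r^{sp}$ emerging from $(r^N)^{sp/N}$ multiplies cleanly with the seminorm term, while simultaneously cancelling the $r^{-sp}$ sitting in front of $\|f\|_{L^p}^p$ in the embedding, producing precisely the inequality as stated. The main obstacle is the critical/supercritical regime $sp\ge N$, where $p^*_s$ degenerates: there I would pick $\tilde s\in(0,\min(s,N/p))$, use the continuous embedding $W^{s,p}(B_r)\hookrightarrow W^{\tilde s,p}(B_r)$ with its $r^{s-\tilde s}$ scaling, run the argument above at level $\tilde s$ to reach $L^{p(1+2\tilde s/N)}$, and close the integrability gap up to $L^{p(1+2s/N)}$ with one further Hölder interpolation against the $L^2$ sup. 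Apart from this bookkeeping, the proof is an exercise in Hölder and elliptic fractional Sobolev, with no PDE input.
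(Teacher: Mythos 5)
Your subcritical argument is correct and complete: for $sp<N$ the chain ``scaled fractional Sobolev embedding on $B_r$ $\Rightarrow$ H\"older in $x$ with exponents $\bigl(\tfrac{N}{N-sp},\tfrac{N}{sp}\bigr)$ $\Rightarrow$ integrate in $t$ and pull out $\sup_t\int_{B_r}|f|^2=c_N r^N\sup_t\fint_{B_r}|f|^2$'' reproduces exactly the statement, and the exponent bookkeeping ($p^*_s\cdot\tfrac{N-sp}{N}=p$, $(r^N)^{sp/N}=r^{sp}$ cancelling the $r^{-sp}$ in the embedding) checks out. Note that the paper does not prove this lemma at all --- it is imported from \cite[Lemma 2.3]{Ding2021} --- and your computation is essentially the standard proof of that cited result in the subcritical range.

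The genuine gap is your treatment of $sp\ge N$, which is part of the statement (the lemma allows all $1\le p<\infty$, and the paper applies it for arbitrary $p\in(1,\infty)$, so $sp\ge N$ really occurs, e.g.\ $N=1,2$ and $p$ large). Running your argument at a level $\tilde s$ with $\tilde s p<N$ controls the space--time integral of $|f|^{p(1+2\tilde s/N)}$, and here $p(1+2\tilde s/N)=p+2\tilde sp/N<p+2$, while the target exponent satisfies $p(1+2s/N)=p+2sp/N\ge p+2>2$. Thus the target exponent strictly exceeds \emph{both} exponents you have available ($p(1+2\tilde s/N)$ and the $L^2$ reservoir), and no H\"older interpolation can raise integrability above the larger of the two exponents being interpolated; more structurally, a single H\"older application in $x$ can never produce the factor $\bigl(\int_{B_r}|f|^2\,dx\bigr)^{sp/N}$ with power $sp/N\ge 1$, since that would require a conjugate exponent $\le 1$. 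The borderline case $sp=N$ is not rescued by letting $\tilde s\uparrow N/p$ either, because the Sobolev constant blows up as $\tilde sp\to N$. A correct handling of $sp\ge N$ needs a different ingredient: either the fractional Gagliardo--Nirenberg inequality with $\theta=\tfrac{N}{N+2s}$ (interpolate $W^{s,p}\cap L^2$ into a subcritical $W^{\theta s,q_0}$ with $\tfrac1{q_0}=\tfrac{\theta}{p}+\tfrac{1-\theta}{2}$, for which $\theta s\,q_0<N$ always, then embed into $L^{p(1+2s/N)}$), or, in the supercritical case, the Morrey embedding $W^{s,p}(B_r)\hookrightarrow C^{0,s-N/p}$ combined with a measure-theoretic lower bound on $\int_{B_r}|f|^2$ near a point where $|f|$ is close to its supremum, which yields $\|f\|_{L^\infty}^{\,p+2sp/N-2}\lesssim\bigl([f]_{s,p;B_r}^p+r^{-sp}\|f\|_{L^p(B_r)}^p\bigr)\bigl(\int_{B_r}|f|^2\bigr)^{\frac{sp}{N}-1}$ and then closes the estimate via $\int|f|^{p+2sp/N}\le\|f\|_\infty^{p+2sp/N-2}\int|f|^2$. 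As written, your final paragraph does not prove the lemma in the range $sp\ge N$.
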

Finally, we have the following iteration result from \cite[Chapter I, Lemma 4.1]{DiBenedetto1993} 
\begin{lemma}\label{geo_con}
	Let $\{Y_n\}$, $n=0,1,2,\ldots$, be a sequence of positive number, satisfying the recursive inequalities 
	\[ Y_{n+1} \leq C b^n Y_{n}^{1+\alpha},\]
	where $C > 1$, $b>1$, and $\alpha > 0$ are given numbers. If 
	\[ Y_0 \leq  C^{-\frac{1}{\alpha}}b^{-\frac{1}{\alpha^2}},\]
	then $\{Y_n\}$ converges to zero as $n\to \infty$. 
\end{lemma}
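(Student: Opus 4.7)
The plan is a direct induction identifying sharp geometric decay. First I would make the ansatz $Y_n \leq Y_0 \omega^n$ with $\omega \in (0,1)$ to be determined, and substitute into the recursion to obtain
\[
Y_{n+1} \leq C b^n (Y_0 \omega^n)^{1+\alpha} = C Y_0^{1+\alpha} (b \omega^{1+\alpha})^n.
\]
To ensure this is bounded by $Y_0 \omega^{n+1}$ for every $n \geq 0$, I would rewrite the desired inequality as $C Y_0^{\alpha} \omega^{-1} (b \omega^\alpha)^n \leq 1$ and impose the two sufficient conditions $b \omega^\alpha \leq 1$ (to kill the $n$-dependence, since $b\omega^\alpha \geq 0$) and $C Y_0^\alpha \leq \omega$ (the residual constant inequality at $n=0$). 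Taking equality in the first condition selects $\omega = b^{-1/\alpha}$, and substitution into the second rearranges to $Y_0 \leq C^{-1/\alpha} b^{-1/\alpha^2}$, which is exactly the hypothesis of the lemma.

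With $\omega := b^{-1/\alpha}$ fixed, the formal induction is immediate. The base case $n=0$ is trivial, and the inductive step follows from the displayed computation together with the hypothesis on $Y_0$. Since $b > 1$ and $\alpha > 0$ force $\omega \in (0,1)$, one concludes $Y_n \leq Y_0 \omega^n \to 0$ as $n \to \infty$. In fact one obtains the quantitative estimate $Y_n \leq Y_0 b^{-n/\alpha}$, which is the form in which the lemma is typically invoked in the De Giorgi iterations later in the paper.

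There is no serious obstacle here. The only ``content'' is spotting the sharp decay rate $\omega = b^{-1/\alpha}$; this choice is uniquely dictated by requiring the inductive step to absorb the geometric growth $b^n$, and the threshold $C^{-1/\alpha} b^{-1/\alpha^2}$ on $Y_0$ is tight in the sense that it is the largest starting value for which the two induction constraints $b \omega^\alpha \leq 1$ and $C Y_0^\alpha \leq \omega$ can be simultaneously satisfied with $\omega \in (0,1)$.
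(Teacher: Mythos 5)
Your induction with the ansatz $Y_n \leq Y_0\,\omega^n$, $\omega = b^{-1/\alpha}$, is correct, and the hypothesis $Y_0 \leq C^{-1/\alpha}b^{-1/\alpha^2}$ is exactly what closes the inductive step. This is essentially the standard argument from the cited reference (DiBenedetto, Chapter I, Lemma 4.1), which the paper invokes without reproving, so there is nothing further to reconcile.
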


\subsection{Shrinking Lemma}
As we are working with nonlocal equations, we do not have any bounds on the gradient of the solution and so we cannot apply the usual De Giorgi isoperimetric inequality to control jumps for the solutions. In fact, such an inequality cannot be true due to the presence of a jump term (see \cite[Theorem 4]{adimurthi2023holder}.) On the other hand, it was observed in \cite{aptHolder} that the isoperimetric inequality for nonlocal equations is a property of the solution itself and can be obtained by a more careful energy estimate. The following can be found in \cite[Lemma 3.4]{aptHolder}
\begin{lemma}\label{lem:shrinking}
Suppose that for some level $m$, some fraction $f \in (0,1)$ and all time levels $\tau$ in some interval $J$ we have
\[
|[u(\cdot,\tau)>m]\cap K_{\rho}| \geq f|K_{\rho}|
\]
and we can arrange that
\begin{align}\label{smallness}
\int_J\int_{K_{\rho}} (u-l)_{-}(x,t)\int_{K_{A\rho}}\frac{(u-l)_{+}^{p-1}(y,t)}{|x-y|^{N+sp}}\,dy\,dx\,dt \leq C_1\frac{l^p}{\rho^{sp}}|Q|
\end{align}
where 
\[
l = \frac{m}{2^{j}} \qquad j\geq 1 \qquad \text{ and } \qquad Q = K_{\rho} \times J
\] 
then we have
\[
\left|\left[u<\frac{m}{2^{j+1}}\right]\cap Q\right| \leq \frac{1}{f}\left(\frac{C}{2^{j}-1}\right)^{p-1}|Q|.
\]
where $C>0$ depends only on $C_1,A$ and $N$.
\end{lemma}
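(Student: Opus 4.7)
The plan is to extract the bound on $|\{u<m/2^{j+1}\}\cap Q|$ directly from the smallness assumption \cref{smallness} by restricting the double integral to a sub-region on which both factors of the integrand and the kernel admit clean pointwise lower bounds. Set $k:=l/2=m/2^{j+1}$ and, for each $\tau\in J$, write $A_k(\tau):=\{x\in K_\rho:u(x,\tau)<k\}$ and $B_m(\tau):=\{y\in K_\rho:u(y,\tau)>m\}$. The quantity we are after is $|A_k\cap Q|=\int_J|A_k(\tau)|\,d\tau$, and the time-slice hypothesis gives the measure lower bound $|B_m(\tau)|\geq f|K_\rho|$ for every $\tau\in J$.

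The three pointwise bounds I would use are: on $A_k(\tau)$, since $u\geq 0$ and $u<l/2$, we have $(u(x,\tau)-l)_-\geq l/2$; on $B_m(\tau)$, we have $(u(y,\tau)-l)_+\geq m-l=(2^j-1)l$, hence $(u(y,\tau)-l)_+^{p-1}\geq (2^j-1)^{p-1}l^{p-1}$; and for $x,y\in K_\rho$, $|x-y|<2\rho$, so $|x-y|^{-N-sp}\geq(2\rho)^{-N-sp}$. Assuming $A\geq 1$ so that $K_\rho\subseteq K_{A\rho}$, restrict the $x$-integral in \cref{smallness} to $A_k(\tau)$ and the $y$-integral to $B_m(\tau)\subseteq K_{A\rho}$; this yields, for each fixed $\tau$,
\[
\int_{K_\rho}(u-l)_-(x,\tau)\int_{K_{A\rho}}\frac{(u-l)_+^{p-1}(y,\tau)}{|x-y|^{N+sp}}\,dy\,dx\;\geq\;\frac{l}{2}\cdot(2^j-1)^{p-1}l^{p-1}\cdot\frac{f|K_\rho|}{(2\rho)^{N+sp}}\cdot|A_k(\tau)|.
\]
Using $|K_\rho|=c_N\rho^N$, the right-hand side equals $c(N,s,p)\,f(2^j-1)^{p-1}l^p\rho^{-sp}|A_k(\tau)|$. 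Integrating in $\tau$ over $J$ and comparing with the assumed upper bound $C_1l^p\rho^{-sp}|Q|$, the factors $l^p\rho^{-sp}$ cancel and we obtain
\[
|A_k\cap Q|\;\leq\;\frac{C_1}{c(N,s,p)\,f\,(2^j-1)^{p-1}}|Q|\;=\;\frac{1}{f}\left(\frac{C}{2^j-1}\right)^{p-1}|Q|,
\]
with $C^{p-1}=C_1/c(N,s,p)$, which is precisely the claimed estimate.

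There is essentially \emph{no serious obstacle} here: the statement is a purely measure-theoretic consequence of the bilinear smallness \cref{smallness} once one chooses the right truncation levels and sub-region. The only minor point worth flagging is the need $A\geq 1$ to embed $B_m(\tau)\subseteq K_{A\rho}$ (this is harmless because the hypothesis is always applied with $A\geq 1$), and the cosmetic choice of whether to use $(2\rho)^{-N-sp}$ (restricting $y$ to $K_\rho$) or $(2A\rho)^{-N-sp}$ (keeping $y$ in all of $K_{A\rho}$); the former gives the sharper constant but the latter is what produces the claimed explicit dependence of $C$ on $A$.
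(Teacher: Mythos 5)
Your argument is correct and is essentially the same (and the standard) proof: the paper does not prove \cref{lem:shrinking} in its own text but quotes it from \cite[Lemma 3.4]{aptHolder}, whose proof is exactly this restriction of the double integral in \cref{smallness} to the product of the slices $\{u<l/2\}\cap K_\rho$ and $\{u>m\}\cap K_\rho$, together with the pointwise bounds $(u-l)_-\ge l/2$, $(u-l)_+^{p-1}\ge\bigl((2^j-1)l\bigr)^{p-1}$ and the kernel bound $|x-y|^{-N-sp}\ge c\,\rho^{-N-sp}$ for $x,y$ in the relevant balls. The only cosmetic remarks are that your constant does not actually need the dependence on $A$ (since you restrict $y$ to $K_\rho\subseteq K_{A\rho}$), and that it picks up a harmless dependence on $p$ and $s$ through $2^{N+sp}$ and the exponent $1/(p-1)$, which is consistent with the paper's convention on universal constants.
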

\subsection{Clustering Lemma}\label{sec:clust}
The next lemma is a quantitative version of the Severini-Egorov theorem \cite{severini1910sulle,egorov1911fonctions} which was first obtained for Sobolov functions in \cite{clusterarma,clusteringl1}. It was proved by  for $W^{s,p}$ functions in \cite{düzgün2023clustering}. Its main use is in removing the power like dependence on $\delta$ in the expansion of positivity. Usually, in the local case, this lemma is used to also remove the exponential dependence on $\al$ and turn it into a polynomial decay in the expansion of positivity \cite{DiBenedetto2012}; but for us, our shrinking lemma already gives us polynomial dependence on $\al$. In particular, we may still get a weak Harnack without using the following lemma but at the cost that our weak Harnack may depend on $\delta$ i.e. the scale of the time interval considered.   
\begin{lemma}
    Let $u \in W^{s,p}(K_{\rho})$ satisfy
    \[
    |\{u>1\}\cap K_{\rho}| > \alpha |K_{\rho}| \quad \text{and} \quad [u]_{s,p;K_{\rho}} \leq \gamma\rho^{\frac{N-sp}{p}}
    \]
    for some $\gamma > 0$ and $0<\alpha<1$. Then for every $\delta \in (0,1)$ and $0<\lambda<1$ there exists a $y \in K_{\rho}$ and $\sigma = \sigma(\alpha,\delta.\gamma,\lambda,N) \in (0,1)$ such that 
    \[
    |\{u>\lambda\} \cap K_{\sigma\rho}(y)| > (1-\delta)|K_{\sigma\rho}|.
    \]
    Further the reduction parameter $\sigma$ depends on the parameter $\alpha$ and on the constant $\gamma$ as
    \begin{equation}
        \sigma = B^{-1}\frac{\alpha^{p+1}}{\gamma^p}
    \end{equation}
    for a constant $B = B(\delta,\lambda,N)>1$. 
\end{lemma}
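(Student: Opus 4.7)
My plan is to argue by contradiction, using a fractional Poincar\'e-type jump inequality to convert the coexistence of the two well-separated superlevel sets $\{u > 1\}$ and $\{u \leq \lambda\}$ inside small subcubes into a lower bound on the Gagliardo seminorm that ultimately contradicts the hypothesis $[u]_{s,p;K_\rho} \leq \gamma \rho^{(N-sp)/p}$.

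Suppose the conclusion fails for some $\sigma \in (0,1)$ to be chosen. Then for every $y \in K_\rho$ one has $|\{u \leq \lambda\} \cap K_{\sigma\rho}(y)| \geq \delta \, |K_{\sigma\rho}|$. I would tile $K_\rho$ by a disjoint family $\{K_i\}_{i=1}^{M}$ of cubes of sidelength comparable to $\sigma\rho$, so that $M \sim \sigma^{-N}$, and write $A_i = \{u > 1\} \cap K_i$ and $B_i = \{u \leq \lambda\} \cap K_i$. The failure of the conclusion gives $|B_i| \geq c_N \, \delta \, (\sigma\rho)^N$, while the hypothesis on the large superlevel set yields $\sum_i |A_i| \geq c_N \, \alpha \, \rho^N$.

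The key localised ingredient is the elementary bound
\begin{equation*}
[u]_{s,p;K_i}^p \;\geq\; \iint_{A_i \times B_i} \frac{|u(x) - u(z)|^p}{|x-z|^{N+sp}} \, dx \, dz \;\geq\; \frac{(1-\lambda)^p \, |A_i| \, |B_i|}{(c_N \sigma \rho)^{N+sp}},
\end{equation*}
which uses only that $u(x) - u(z) \geq 1 - \lambda$ on $A_i \times B_i$ and that $|x-z| \leq c_N \sigma \rho$ on $K_i \times K_i$. Summing over the disjoint subcubes $K_i$, invoking the obvious subadditivity $\sum_i [u]_{s,p;K_i}^p \leq [u]_{s,p;K_\rho}^p$, and inserting the lower bounds on $|B_i|$ and $\sum_i |A_i|$ against $[u]_{s,p;K_\rho}^p \leq \gamma^p \rho^{N-sp}$ reduces to
\begin{equation*}
\sigma^{sp} \;\geq\; c(N,\delta,\lambda) \, \frac{\alpha}{\gamma^p}.
\end{equation*}
Thus, picking $\sigma$ strictly below this threshold forces a contradiction and produces a centre $y \in K_\rho$ for which the desired lower bound on $|\{u > \lambda\} \cap K_{\sigma\rho}(y)|$ holds.

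The main obstacle is to upgrade this one-shot argument to the sharp scaling $\sigma = B^{-1} \alpha^{p+1}/\gamma^p$ asserted in the statement, since the direct route sketched above yields only $\sigma \sim (\alpha/\gamma^p)^{1/(sp)}$, which has the wrong exponent on $\alpha$. I would expect the improvement to come from iterating the covering argument, or equivalently from a stopping-time / Calder\'on--Zygmund decomposition applied to $\{u > 1\}$ at successive dyadic scales, with each layer of the iteration contributing an additional factor of $\alpha$ to the final sidelength at the price of more careful constant tracking. The remaining structural ingredients --- the fractional jump estimate, the disjointness-based subadditivity of the Gagliardo seminorm, and the final contradiction --- are robust and should carry through any such refinement with only cosmetic changes.
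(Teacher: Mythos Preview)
The paper does not actually prove this lemma: it is stated in Section~2.3 and attributed to an external reference (the fractional clustering lemma of D\"uzg\"un et al., the nonlocal analogue of the DiBenedetto--Gianazza--Vespri $W^{1,p}$ result). So there is no in-paper argument to compare against, and I can only assess your proposal on its own merits.

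Your core mechanism --- tile $K_\rho$ by sub-balls of radius $\sigma\rho$, use the elementary fractional jump bound
\[
[u]_{s,p;K_i}^p \;\geq\; (1-\lambda)^p\,\frac{|A_i|\,|B_i|}{(c_N\sigma\rho)^{N+sp}}
\]
on each piece, sum via disjointness, and contradict the global seminorm bound --- is correct and is the standard skeleton of such clustering proofs. As you yourself compute, it delivers $\sigma^{sp}\gtrsim \alpha/\gamma^p$, which is a valid clustering statement with polynomial dependence on $\alpha$ and $\gamma$.

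The genuine gap, which you explicitly flag but do not close, is that this does \emph{not} match the stated form $\sigma = B^{-1}\alpha^{p+1}/\gamma^p$: in your bound the powers of $\alpha$ and of $\gamma^p$ are locked together through the common exponent $1/(sp)$, whereas the statement decouples them. Your proposed remedy --- iterate the covering or run a Calder\'on--Zygmund stopping time --- is left entirely unexecuted, and it is not evident that naive iteration of the same jump estimate would ever separate those exponents rather than reproduce the same coupling at each scale. So as a proof of the lemma \emph{as stated}, the proposal is incomplete. That said, for the downstream use in this paper (removing the $\alpha$-dependence of the time scale in the expansion of positivity), any polynomial dependence of $\sigma$ on $\alpha$ and $\gamma$ suffices; your weaker version would still drive the weak Harnack argument, only with a different final exponent $d$.
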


\section{Preliminary Estimates}
\subsection{Energy Estimates}
The energy estimate below follows from similar computations already present in the literature - for example we can handle the time term as in \cite[Proposition 3.1]{Bgelein2021} and the space term as in \cite[Theorem 3.1]{aptHolder} and \cite[Proposition 2.1]{liaoholder}. Indeed, the estimate is already available in \cite[Lemma 3.1]{Banerjee2022} except the second term on the left hand side which is crucial to the usage of the shrinking lemma - it can be obtained from the nonlocal term exactly as in \cite{aptHolder,liaoholder}.
\begin{proposition}\label{Prop:energy}
	Let $u$ be a  local weak sub(super)-solution to in $E_T$.
	There exists a constant $\boldsymbol \gamma >0$ depending only on the data such that
 	for all cylinders $Q_{R,S}=K_R(x_o)\times (t_o-S,t_o)\Subset E_T$,
 	every $k\in\bb{R}$, and every non-negative, piecewise smooth cutoff function
 	$\zeta$ vanishing on $\de K_{R}(x_o)\times (t_o-S,t_o)$,  there holds
\begin{align*}
	\esssup_{t_o-S<t<t_o}&\int_{K_R(x_o)\times\{t\}}	
	\zeta^p\mathfrak g_\pm (u,k)\,dx +
	\iint\limits_{Q_{R,S}} (u-k)_{\pm}(x,t)\int\limits_{K_R(x_o)}\frac{(u-k)_{\mp}^{p-1}(y,t)}{|x-y|^{N+sp}}\,dy\,dx\,dt 
	\\
	&\qquad+\underset{Q_{R,S}}{\iint}|(u-k)_{\pm}(x,t)\zeta(x,t)-(u-k)_{\pm}(y,t)\zeta(y,t)|^p\,d\mu\,dt\\
	&\leq
	\boldsymbol \gamma\int_I\iint\limits_{K\times K} \max\{(u-k)_{\pm}(x,t),(u-k)_{\pm}(y,t)\}^{p}|\zeta(x,t)-\zeta(y,t)|^p\,d\mu\,dt
	+
	\iint_{Q_{R,S}}\mathfrak g_\pm (u,k)|\partial_t\zeta^p| \,dx\,dt
	\\
	&\qquad+\int_{K_R(x_o)\times \{t_o-S\}} \zeta^p \mathfrak g_\pm (u,k)\,dx 
	+ C\underset{\stackrel{t \in I}{x\in \text{supp }\xi}}{\esssup}\,\int_{K^c}\frac{w_{\pm}^{p-1}(y,t)}{|x-y|^{N+sp}}\,dy\iint\limits_{I\times K} w_{\pm}(x,t)\xi(x,t)\,dx\,dt
\end{align*}
\end{proposition}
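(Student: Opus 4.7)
The plan is to test the weak formulation (after Steklov averaging in time so that the time derivative makes sense) with $\varphi = (u-k)_\pm \zeta^p$, where $\zeta$ is the given cutoff, then pass to the Steklov limit. Integrating from $t_o - S$ to an arbitrary $t \in (t_o - S, t_o)$ produces three kinds of contributions that must be analyzed separately: a time-derivative term, a nonlocal bilinear term over $\mathbb{R}^N \times \mathbb{R}^N$, and a tail contribution from the complement of $K_R(x_o)$.

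For the time term, I would use the standard Bögelein-type identity relating $\partial_t(|u|^{p-2}u) \cdot (u-k)_\pm$ to $\partial_t \mathfrak{g}_\pm(u,k)$; after integration by parts in $t$ against $\zeta^p$ this produces exactly the supremum-in-time quantity on the left and the two terms $\iint \mathfrak{g}_\pm |\partial_t \zeta^p|\,dx\,dt$ and the initial slice $\int_{K_R \times \{t_o - S\}} \zeta^p \mathfrak{g}_\pm$ on the right. This part is by now routine and can be imported from \cite[Proposition 3.1]{Bgelein2021}; the only care needed is in choosing an admissible Steklov-averaged truncation so that $\varphi$ lies in the required test class.

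The nonlocal term is the real work. I would split $\mathbb{R}^N \times \mathbb{R}^N$ into $K_R \times K_R$ and its complement. On $K_R \times K_R$, the pointwise inequality
\begin{align*}
& |u(x,t)-u(y,t)|^{p-2}(u(x,t)-u(y,t))\bigl((u-k)_\pm(x,t)\zeta^p(x,t) - (u-k)_\pm(y,t)\zeta^p(y,t)\bigr) \\
& \qquad \geq \tfrac{1}{\boldsymbol\gamma}\bigl|(u-k)_\pm(x,t)\zeta(x,t) - (u-k)_\pm(y,t)\zeta(y,t)\bigr|^p \\
& \qquad\quad + \tfrac{1}{\boldsymbol\gamma}\bigl((u-k)_\pm(x,t)(u-k)_\mp^{p-1}(y,t) + (u-k)_\pm(y,t)(u-k)_\mp^{p-1}(x,t)\bigr)\min\{\zeta^p(x,t),\zeta^p(y,t)\} \\
& \qquad\quad - \boldsymbol\gamma\, \max\{(u-k)_\pm(x,t),(u-k)_\pm(y,t)\}^p |\zeta(x,t)-\zeta(y,t)|^p
\end{align*}
is the key algebraic fact (as in \cite[Theorem 3.1]{aptHolder} and \cite[Proposition 2.1]{liaoholder}); integrating it against $K\,dy\,dx\,dt$ and using the kernel bound $K \geq \lambda|x-y|^{-N-sp}$ gives simultaneously the Gagliardo seminorm of $(u-k)_\pm \zeta$ on the left, the crucial cross term $\iint (u-k)_\pm(x) \int_{K_R} (u-k)_\mp^{p-1}(y)/|x-y|^{N+sp}\,dy\,dx\,dt$ on the left (after symmetrising in $x,y$), and the admissible error term with $\max\{\cdots\}^p|\zeta(x)-\zeta(y)|^p$ on the right. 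On the complement $(K_R \times K_R^c) \cup (K_R^c \times K_R)$, the identity $\zeta \equiv 0$ outside $K_R$ reduces the integrand to a one-sided expression in which only $(u-k)_\pm(x,t)\zeta^p(x,t)$ for $x \in \mathrm{supp}\,\zeta$ survives; the factor $|u(x)-u(y)|^{p-2}(u(x)-u(y))$ is then controlled in absolute value by $w_\pm^{p-1}(y)$ up to lower-order terms, producing exactly the tail term $\mathrm{esssup}_{x \in \mathrm{supp}\,\xi} \int_{K^c} w_\pm^{p-1}(y)/|x-y|^{N+sp}\,dy$ on the right. The piece on $K_R^c \times K_R^c$ vanishes identically since $\zeta = 0$ there.

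The main obstacle, and the step I would spend most of the writing on, is establishing the pointwise inequality above in the form that extracts both the Gagliardo part and the shrinking-lemma cross term at the same time: one needs to separate cases according to the signs of $u(x)-k$ and $u(y)-k$ and in the mixed-sign case use $|u(x)-u(y)|^{p-1} \asymp ((u-k)_+(x) + (u-k)_-(y))^{p-1} \geq (u-k)_+(x)^{p-1} + (u-k)_-(y)^{p-1}$ together with \cref{lem:fusco} to produce the bilinear $(u-k)_+(x)(u-k)_-^{p-1}(y)$ factor that Lemma \ref{lem:shrinking} later requires. Once this inequality is in hand, assembling the four terms and absorbing the error estimates is a standard computation, and the final bound follows after taking essential supremum in $t \in (t_o - S, t_o)$.
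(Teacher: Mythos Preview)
Your plan is correct and follows essentially the same route the paper indicates: the paper does not give a detailed proof but points to \cite[Proposition~3.1]{Bgelein2021} for the time term and to \cite[Theorem~3.1]{aptHolder}, \cite[Proposition~2.1]{liaoholder} for the nonlocal splitting and the extraction of the cross term---exactly the references and decomposition you describe. The pointwise algebraic inequality you isolate (with the mixed-sign case yielding the $(u-k)_\pm(x)(u-k)_\mp^{p-1}(y)$ contribution) is precisely the mechanism the paper has in mind when it says the second left-hand term ``can be obtained from the nonlocal term exactly as in \cite{aptHolder,liaoholder}.''
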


\subsection{Tail Estimates}\label{sec:tail}

In this section, we shall outline how the  estimate for the tail term is made and we shall refer to this section whenever a similar calculation is required in subsequent sections.

For any level $k$ we want to estimate
\[
\underset{\stackrel{t \in J;}{x\in \spt\zeta}}{\esssup}\int_{K_{\rho}^c(y_0)}\frac{(u-k)_{-}^{p-1}(y,t)}{|x-y|^{N+sp}}\,dy ,
\]
where $J$ is some time interval and $\zeta$ is a cutoff function. We will typically choose $\zeta$ to be supported in $K_{\vartheta\rho}$ for some $\vartheta \in (0,1)$. In such cases,
\[
|y-y_0| \leq |x-y|\left(1+\frac{|x-y_0|}{|x-y|}\right)\leq  |x-y|\left(1+\frac{\vartheta}{(1-\vartheta)}\right),
\]
so we can make a first estimate
\[
\underset{\stackrel{t \in J;}{ x\in \spt\zeta}}{\esssup}\int_{K_{\rho}^c(y_0)}\frac{(u-k)_{-}^{p-1}(y,t)}{|x-y|^{N+sp}}\,dy 
\leq \frac{1}{(1-\vartheta)^{N+sp}}\underset{t \in J}{\esssup}\int_{K_{\rho}^c(y_0)}\frac{(u-k)_{-}^{p-1}(y,t)}{|y-y_0|^{N+sp}}\,dy. 
\]
At this stage we invoke the global nonnegativity assumption on $u$ to get $(u-k)_{-} \leq k$. This leads us to the following  estimate:
\[
\underset{\stackrel{t \in J;}{ x\in \spt\zeta}}{\esssup}\int_{K_{\rho}^c(y_0)}\frac{(u-k)_{-}^{p-1}(y,t)}{|x-y|^{N+sp}}\,dy 
\leq \frac{C}{(1-\vartheta)^{N+sp}}\frac{k^{p-1}}{\rho^{sp}}
\]
for a universal constant $C>0$. In the absence of global non-negativity, the situation is more complicated and leads to tail alternatives.  

\section{Expansion of Positivity}
Throughout this section, we will work with globally nonnegative super-solutions to ~\cref{eq:main-eq}. We assume $(x_o,t_o) \in E$ and 
\begin{equation}\label{Eq:1:5}
	K_{8\varrho}(x_o)\times\big(t_o,t_o+(8\varrho)^{sp}\big]\subset E_T.
\end{equation}
We will prove the following expansion of positivity result for super-solutions. 
\begin{proposition}\label{Prop:1:1}
	Let $u$ be a globally nonnegative weak super-solution to \cref{eq:main-eq} in $E_T$.
	Suppose for some $(x_o,t_o)\in E_T$, $M>0$, $\alpha \in(0,1)$ and $\varrho>0$ we have \cref{Eq:1:5} and
	\begin{equation*}
		\left|\left\{u(\cdot, t_o)\geq M\right\}\cap K_\varrho(x_o)\right|
		\geq
		\alpha \big|K_\varrho\big|.
	\end{equation*}
Then there exist constants $\delta$ and $\eta$ in $(0,1)$ depending only on the data and $\alpha$,
such that 
\begin{equation*}
	u\geq\eta M
	\quad
	\mbox{a.e.~in $K_{2\varrho}(x_o)\times\big(t_o+\delta(\frac{1}{2}\varrho)^p,t_o+\delta\varrho^{sp}\big],$}
\end{equation*}

\end{proposition}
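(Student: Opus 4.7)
The plan is to implement the nonlocal analogue of the DiBenedetto--Gianazza--Vespri expansion of positivity, using the shrinking lemma (\cref{lem:shrinking}) in place of the classical De Giorgi isoperimetric inequality. I would propagate the measure information from the single time $t_o$ to an entire time interval, then deduce a pointwise lower bound on a small cylinder by a De Giorgi iteration, and finally expand the ball by iterating the argument finitely many times.

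\textbf{Time propagation.} For a parameter $\eta_o\in(0,1)$ to be chosen later, I would apply \cref{Prop:energy} to the truncation $(u-\eta_o M)_-$ on $K_{2\varrho}(x_o)\times(t_o,t_o+\delta_1\varrho^{sp}]$ with a time-independent cutoff $\zeta$ equal to $1$ on $K_\varrho(x_o)$ and supported in $K_{2\varrho}(x_o)$. The initial slice obeys $\int_{K_{2\varrho}(x_o)}\mathfrak g_-(u,\eta_o M)(\cdot,t_o)\,dx \leq \boldsymbol\gamma(1-\alpha)(\eta_o M)^p|K_{2\varrho}|$, since $\mathfrak g_-(u,\eta_o M)$ vanishes wherever $u\geq M$ (a set of measure $\geq\alpha|K_\varrho|$ by hypothesis). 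The nonlocal tail is controlled by $C(\eta_o M)^{p-1}\varrho^{-sp}$ using global nonnegativity as in \cref{sec:tail}. Balancing these against the pointwise-in-time left-hand side and choosing $\eta_o$ and $\delta_1$ suitably small in $\alpha$ yields
\[
\big|\{u(\cdot,t)\geq \eta_o M\}\cap K_\varrho(x_o)\big| \geq \tfrac{\alpha}{2}|K_\varrho|, \qquad \forall\, t\in (t_o,t_o+\delta_1\varrho^{sp}].
\]

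\textbf{Shrinking and De Giorgi iteration.} Taking $m=\eta_o M$, $f=\alpha/2$, $l=\eta_o M/2^j$ and $Q=K_\varrho(x_o)\times(t_o,t_o+\delta_1\varrho^{sp}]$ in \cref{lem:shrinking}, I would verify the smallness hypothesis \eqref{smallness} by bounding the inner integral tail via the recipe of \cref{sec:tail}, using $(u-l)_+\leq u$ and global nonnegativity. This gives, for every $j\geq 1$,
\[
\Big|\big\{u<\eta_o M/2^{j+1}\big\}\cap Q\Big| \leq \tfrac{2}{\alpha}\big(C/(2^j-1)\big)^{p-1}|Q|.
\]
Fixing $j_*=j_*(\alpha,\text{data})$ so that the right-hand side is below the threshold produced by combining \cref{Prop:energy} (at levels $k_n=\tfrac12\eta_1 M+\eta_1 M/2^{n+1}$ with $\eta_1=\eta_o/2^{j_*+1}$), \cref{fracpoin}, and \cref{geo_con}, a standard De Giorgi iteration on $K_{\varrho/2}(x_o)\times(t_o+\tfrac12\delta_1\varrho^{sp},t_o+\delta_1\varrho^{sp}]$ produces the recursion $Y_{n+1}\leq Cb^n Y_n^{1+\kappa}$ with $\kappa=2s/N$; \cref{geo_con} then delivers $u\geq \tfrac12\eta_1 M$ on that sub-cylinder.

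\textbf{Spatial expansion and main obstacle.} To expand from $K_{\varrho/2}(x_o)$ up to $K_{2\varrho}(x_o)$ I would iterate the above steps at geometrically growing radii: once a pointwise bound on a full sub-ball is available, the measure parameter is reset to a universal constant, so each additional doubling costs only a fixed multiplicative factor, and finitely many iterations $n_*=n_*(N)$ suffice. Careful tracking of the time windows across the iterations produces the stated interval and constants $\delta,\eta\in(0,1)$ depending only on $\alpha$ and the data. The main obstacle is the time-propagation step: one must absorb the initial deficit $(1-\alpha)(\eta_o M)^p$ against $\mathfrak g_-$ on $\{u(\cdot,t)<\eta_o M\}$ while simultaneously dominating the nonlocal tail, and this dual constraint forces the joint calibration of $\eta_o$ and $\delta_1$ in terms of $\alpha$ — the very reason both $\delta$ and $\eta$ in the conclusion depend on $\alpha$.
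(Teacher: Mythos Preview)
Your three-step architecture (time propagation $\to$ shrinking $\to$ De Giorgi iteration) matches the paper exactly, but two of the steps contain genuine gaps as written, and the organization differs in one respect.

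\textbf{Verification of \eqref{smallness}.} You propose to check \eqref{smallness} ``by bounding the inner integral tail via the recipe of \cref{sec:tail}, using $(u-l)_+\leq u$ and global nonnegativity.'' This does not work: the inner integral in \eqref{smallness} runs over $K_{A\rho}$ and involves $(u-l)_+^{p-1}$, which is not a priori bounded --- global nonnegativity gives $u\geq 0$, not an upper bound, so $(u-l)_+\leq u$ is of no help. The point is that \eqref{smallness} is not obtained by estimating the integrand directly; the cross term
\[
\iint (u-k)_-(x,t)\int \frac{(u-k)_+^{p-1}(y,t)}{|x-y|^{N+sp}}\,dy\,dx\,dt
\]
already sits on the \emph{left} side of \cref{Prop:energy}. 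In the paper's \cref{Lm:3:2} one writes the energy inequality for $(u-k_j)_-$ on $K_{8\varrho}\times(0,\delta\varrho^{sp}]$ and bounds the \emph{right} side, whose tail piece involves $(u-k_j)_-\leq k_j$ (this is where nonnegativity is used), together with the initial term $\int \mathfrak g_-(u,k_j)\leq \boldsymbol\gamma k_j^{p}|K_{8\varrho}|$. The right side then has the form $C\delta^{-1}\varrho^{-sp}k_j^{p}|\widehat Q|$, which is exactly \eqref{smallness}.

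\textbf{Time-propagation setup.} With the energy written on $K_{2\varrho}$ and $\zeta\equiv 1$ on $K_\varrho$, the initial slice picks up a contribution of order $(\eta_o M)^p|K_{2\varrho}\setminus K_\varrho|=(2^N-1)(\eta_o M)^p|K_\varrho|$ from the annulus, where you have no measure information; this swamps the $(1-\alpha)$ gain and your stated bound $\boldsymbol\gamma(1-\alpha)(\eta_o M)^p|K_{2\varrho}|$ is false for $\alpha$ near $1$. The paper (\cref{Lm:3:1}) stays on $K_\varrho$ with a cutoff supported in $K_{(1-\sigma/2)\varrho}$, $\sigma\sim\alpha$, uses level $k=M$ rather than $\eta_o M$, and extracts the lower level $k_\epsilon=\epsilon M$ a posteriori through the ratio $\int_0^k|s|^{p-2}(s-k)_-\,ds\big/\int_{k_\epsilon}^k|s|^{p-2}(s-k)_-\,ds=1+O(\epsilon)$.

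\textbf{Organization.} The paper runs the shrinking lemma on $\widehat Q=K_{4\varrho}\times(0,\delta\varrho^{sp}]$ (energy written over $K_{8\varrho}$), so the De Giorgi step (\cref{Lm:3:3} with $\theta=\delta$) already lands on $K_{2\varrho}$; no spatial iteration is needed at this stage. Your proposed iteration from $K_{\varrho/2}$ to $K_{2\varrho}$ is valid but redundant here --- it is precisely the mechanism the paper reserves for \cref{Prop:nopower}, where the clustering lemma has shrunk the ball and one must re-expand with $\alpha=\tfrac12$.
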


\begin{remark}\label{Rem:1.1}\upshape
	By repeated applications of Proposition \ref{Prop:1:1},
	we could conclude that for an arbitrary $A>1$,
	there exists some $\bar\eta\in (0,1)$ depending on $\al$, the data and also on $A$, such that
	\begin{equation*}
	u\ge\bar\eta M
	\quad
	\mbox{a.e.~in $K_{2\varrho}(x_o)\times(t_o+\varrho^p,t_o+A\varrho^p]$,}
\end{equation*}
provided this cylinder is included in $E_T$. 
\end{remark}
\subsection{De Giorgi Lemma}
Here we prove a De Giorgi-type Lemma on cylinders of the form $Q_\rho(\theta)$. In the proof of \cref{Prop:1:1} $\theta \in (0,1)$ will be a universal constant depending only on the data.
\begin{lemma}\label{Lm:3:3}
Let $u$ be a globally nonnegative weak supersolution to ~\cref{eq:main-eq} in $E_T$ and
$(x_o,t_o) + Q_\varrho(\theta) =K_\varrho (x_o)\times (t_o-\theta\varrho^{sp}, t_o]\Subset E_T$ with $\theta \in (0,1)$. Suppose that $M>0$. Then there exists a constant $\nu\in(0,1)$ depending only 
the data and $\theta$, such that if
\begin{equation*}
	\Big|\Big\{
	 u \le M\Big\}\cap (x_o,t_o)+Q_{\varrho}(\theta)\Big|
	\le
	\nu|Q_{\varrho}(\theta)|,
\end{equation*}
then
\begin{equation*}
	u\ge\tfrac{1}2M
	\quad
	\mbox{a.e.~in $(x_o,t_o)+Q_{\frac{1}2\varrho}(\theta)$.}
\end{equation*}
Further the smallness parameter $\nu$ depends on the scaling parameter $\theta$ as
    \begin{equation}
        \nu = B_1^{-1}\theta^{\beta}
    \end{equation}
    for a constant $B_1 >1$ depending only on the data and an exponent $\beta>1$ depending only on $N$ and $p$.
\end{lemma}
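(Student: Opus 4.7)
The plan is to run a standard De Giorgi iteration on a shrinking sequence of cylinders with increasing truncation levels, adapted to the intrinsic time scaling $\theta\varrho^{sp}$ and to the nonlocal tail. For $n\ge 0$, set
\[
\varrho_n = \tfrac{\varrho}{2} + \tfrac{\varrho}{2^{n+1}}, \qquad k_n = \tfrac{M}{2} + \tfrac{M}{2^{n+1}}, \qquad Q_n = K_{\varrho_n}(x_o)\times\big(t_o - \theta\varrho_n^{sp},t_o\big],
\]
and pick smooth cutoffs $\zeta_n$ with $\zeta_n\equiv 1$ on $Q_{n+1}$, $\spt\zeta_n\subset Q_n$, $|D\zeta_n|\lesssim 2^n/\varrho$ and $|\partial_t\zeta_n^p|\lesssim 2^{psn}/(\theta\varrho^{sp})$. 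Let $A_n = \{u<k_n\}\cap Q_n$ and $Y_n = |A_n|/|Q_n|$. I want to derive a recursion $Y_{n+1}\le C\,b^n\theta^{-\gamma}Y_n^{1+\mu}$ and then apply \cref{geo_con}.

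First I would apply the energy estimate \cref{Prop:energy} with the minus-truncation $(u-k_n)_-$ on the cylinder $Q_n$ with cutoff $\zeta_n$. By \cref{lem:g} together with $0\le(u-k_n)_-\le k_n\le M$, the time term contributes $\lesssim \tfrac{2^{psn}}{\theta\varrho^{sp}}M^p|A_n|$; the local space cross-term gives $\lesssim (2^n/\varrho)^p \cdot M^p|A_n|$; and the tail term, by the calculation sketched in \cref{sec:tail} (using global nonnegativity so $(u-k_n)_-\le k_n\le M$), contributes $\lesssim 2^{n(N+sp)}M^p|A_n|/\varrho^{sp}$. Dropping $\theta\le 1$ through the bounds, the right-hand side is dominated by $C\,b^n\,M^p|A_n|/(\theta\varrho^{sp})$. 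This gives control of $[(u-k_n)_-\zeta_n]_{W^{s,p}}$ in $L^p$-in-time on $Q_n$ and, simultaneously, a sup-in-time $L^2$-bound $\esssup_t \int_{K_{\varrho_n}}\zeta_n^p\,\mathfrak g_-(u,k_n)\,dx \lesssim b^n M^p|A_n|/(\theta\varrho^{sp})$.

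Next, I plug the function $(u-k_n)_-\zeta_n$ into the parabolic Sobolev embedding \cref{fracpoin}, yielding
\[
\iint_{Q_{n+1}}(u-k_n)_-^{p(1+\tfrac{2s}{N})}\,dx\,dt \;\le\; C\,b^n\!\left(\frac{M^p|A_n|}{\theta\varrho^{sp}}\right)\!\left(\frac{M^p|A_n|}{\theta\varrho^{sp}\cdot\varrho^{N}}\right)^{\!\tfrac{sp}{N}}\!\!,
\]
after absorbing the lower-order $L^p$ term. Bounding the left side from below by $(k_n-k_{n+1})^{p(1+\tfrac{2s}{N})}|A_{n+1}|$ and recalling $k_n-k_{n+1}=M/2^{n+2}$, algebra produces a recursion of the form
\[
Y_{n+1} \;\le\; \frac{C\,b^n}{\theta^{1+sp/N}}\, Y_n^{1+\tfrac{sp}{N}}.
\]
Applying \cref{geo_con} with $\mu = sp/N$, the smallness threshold is $Y_0\le C^{-1/\mu}b^{-1/\mu^2}$, which takes the form $\nu \le B_1^{-1}\theta^{\beta}$ with $\beta = (1+sp/N)/\mu = (N+sp)/sp>1$ and $B_1$ depending only on the data; this forces $Y_n\to 0$, i.e.\ $u\ge M/2$ a.e.\ on $Q_{\varrho/2}(\theta)$.

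The main obstacle is the careful bookkeeping of the $\theta$-dependence: the time cutoff derivative produces a $\theta^{-1}$ factor, and the $L^\infty_tL^2_x$ bound produces a further $\theta^{-sp/N}$ after applying \cref{fracpoin}, so one must verify that the combined factor $\theta^{-(1+sp/N)}$ is exactly balanced by one power of $Y_n^{\mu}$ in the iteration so that the threshold scales like a positive power of $\theta$. A secondary technical point is ensuring that the nonlocal tail estimate of \cref{sec:tail}, which is carried out for a single cutoff, can be uniformly bounded across the iteration; since each $\zeta_n$ is supported in $K_{\varrho_n}\subset K_\varrho$ and we are working with the minus-truncation, the factor $(1-\vartheta)^{-(N+sp)}$ is bounded by a universal constant and is absorbed into $b^n$.
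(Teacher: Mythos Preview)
Your proposal is correct and follows essentially the same De Giorgi iteration as the paper: energy estimate for $(u-k_n)_-$ on nested cylinders, tail controlled via global nonnegativity as in \cref{sec:tail}, parabolic Sobolev embedding \cref{fracpoin}, and then \cref{geo_con}. The only cosmetic differences are that the paper introduces intermediate levels $\tilde k_n$ and intermediate cylinders $\widetilde Q_n$ and closes the iteration by first applying H\"older to $\iint (u-\tilde k_n)_-$ and then Sobolev (yielding gain exponent $s/(N+2s)$), whereas you bound $\iint (u-k_n)_-^{p(1+2s/N)}$ from below directly (yielding gain exponent $sp/N$); after normalising by $|Q_n|$ both routes give the same power dependence $\nu\sim\theta^{\beta}$, and your slightly looser bookkeeping of the $\theta$-factor still produces a valid threshold with $\beta=(N+sp)/(sp)>1$.
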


\begin{proof}
Without loss of generality assume that $(x_o,t_o)=(0,0)$. For $n=0.1,\ldots$ set
\begin{align}\label{choices:k_n}
	\left\{
	\begin{array}{c}
	\displaystyle k_n=\frac{M}2+\frac{M}{2^{n+1}},\quad \tilde{k}_n=\frac{k_n+k_{n+1}}2,\\[5pt]
	\displaystyle \varrho_n=\frac{\varrho}2+\frac{\varrho}{2^{n+1}},
	\quad\tilde{\varrho}_n=\frac{\varrho_n+\varrho_{n+1}}2,\\[5pt]
	\displaystyle K_n=K_{\varrho_n},\quad \widetilde{K}_n=K_{\tilde{\varrho}_n},\\[5pt] 
	\displaystyle Q_n=Q_{\rho_n}(\theta),\quad
	\widetilde{Q}_n=Q_{\tilde\rho_n}(\theta).
	\end{array}
	\right.
\end{align}
and consider a cutoff function $0\le\zeta\le 1$ supported in $\widetilde{Q}_{n}$ and equal to identity on $Q_{n+1}$, such that
\begin{equation*}
	|\de\zeta|\leq\boldsymbol\gamma\frac{2^n}{\varrho}
	\quad\text{and}\quad 
	|\de_t\zeta|\leq\boldsymbol\gamma\frac{2^{pn}}{\theta\varrho^{sp}}.
\end{equation*}
We note that by Lemma~\ref{lem:g} we have
\begin{equation*}
	\mathfrak g_-(u,k)
	\le
	\boldsymbol\gamma \big(|u|+|k|\big)^{p-2}(u-k)_-^2
	\le
	\boldsymbol\gamma \big(|u|+|k|\big)^{p-1}(u-k)_-
\end{equation*}
and for $\tilde k<k$ there holds $(u-k)_-\ge (u-\tilde k)_-$. Then applying the energy estimates over $K_n$ and $Q_n$ to $ (u-k_n)_{-}$ we get

\begin{align*}
    \underset{-\theta\varrho_n^{sp} < t < 0}{\esssup}\int_{K_n}(|u|+|k_n|)^{p-2}w^2\zeta^p(x,t)\,dx  + \underset{Q_n}{\iint}\frac{|(u-k_n)_{-}(x,t)\zeta(x,t)-(u-k_n)_{-}\zeta(y,t)|^p}{|x-y|^{N+sp}}\,dx \,dy\,dt
    \\ 
    \leq \gamma\frac{2^{np}}{\varrho^{sp}}M^p|[u<k_n]\cap Q_n| + \gamma\frac{2^{np}}{\theta\varrho^{sp}}\iint_{Q_n}(|u|+|k_n|)^{p-1}(u-k_n)_{-}\,dx\,dt 
    \\
    + C\underset{\stackrel{-\theta\varrho_n^{sp} < t < 0;}{ x\in \text{supp }\zeta}}{\esssup}\int_{K_n^c}\frac{(u-k_n)_{-}^{p-1}(y,t)}{|x-y|^{N+sp}}\,dy\int_{-\theta\varrho_n^{sp}}^{0}\int_{K_n} (u-k_n)_{-}(x,t)\zeta(x,t)\,dx\,dt
\end{align*}
Next, we make the following observations 
\begin{itemize}
    \item $0 \le u\le k_n\le M$ on $\big\{u<k_n\big\}$,
    \item $|u|+|k_n|\leq 2 M$ on $\big\{u<k_n\big\}$,
    \item $|u|+|k_n|\geq k_n-u\ge k_n- \tilde{k}_{n}=2^{-(n+3)}M$ on the set $\{u<\tilde{k}_n\}$.
\end{itemize} 
and estimate the Tail as below noting that the global non-negativity of $u$ implies that $(u-k_n)_{-} \leq k_n \leq M$ (see ~\cref{sec:tail}).
\begin{align*}
\underset{\stackrel{-\theta\varrho_n^{sp} < t < 0;}{ x\in \text{supp }\zeta}}{\esssup}\int_{K_n^c}\frac{(u-k_n)_{-}^{p-1}(y,t)}{|x-y|^{N+sp}}\,dy 
\leq c2^{n(N+sp)}\frac{M^{p-1}}{\varrho^{sp}}
\end{align*}
for a universal constant $c>0$ to get

\begin{equation}\label{Eq:sample}
\begin{aligned}
	\frac{M^{p-2}}{2^{p(n+3)}} 
	\int_{\widetilde{K}_n} (u-\tilde{k}_n)_-^2\,dx
	+\underset{\widetilde{Q}_n}{\iint}\frac{|(u-\tilde{k}_n)_{-}(x,t)-(u-\tilde{k}_n)_{-}|^p}{|x-y|^{N+sp}}\,dx \,dy\,dt
	\leq
	\boldsymbol\gamma \frac{2^{n(N+2p)}}{\theta\varrho^{sp}}M^{p}|A_n|.
\end{aligned}
\end{equation}
To ease notation, we  $w = (u-\tilde{k}_n)_{-}$. Now setting $0\leq\phi\leq1$ to be a cutoff function which vanishes on the parabolic boundary $\widetilde{Q}_n$
and equals one on $Q_{n+1}$, an application of the H\"older inequality  and the Sobolev embedding (\cref{fracpoin}) gives that
gives that
\begin{align*}
	\frac{M}{2^{n+3}}
	|A_{n+1}|
	&\leq
	\iint_{\widetilde{Q}_n}\big(u-\tilde{k}_n\big)_-\phi\,dx\,dt\\
	&\leq
	\bigg[\iint_{\widetilde{Q}_n}\big[\big(u-\tilde{k}_n\big)_-\phi\big]^{p\frac{N+2s}{N}}
	\,dx\,dt\bigg]^{\frac{N}{p(N+2s)}}|A_n|^{1-\frac{N}{p(N+2s)}}\\
	&\leq\boldsymbol\gamma
	\left(\rho^{sp}\int_{-\theta\rho_n^{sp}}^{0}\int_{\widetilde{K}_n}\int_{\widetilde{K}_n}\frac{|w\phi(x,t)-w\phi(y,t)|^p}{|x-y|^{N+sp}}\,dx \,dy\,dt + \int_{-\theta\rho_n^{sp}}^{0}\int_{\widetilde{K}_n}[w\phi(x,t)]^p\,dx\,dt\right)^{\frac{N}{p(N+2s)}} 
    \\&\qquad\times \left(\underset{-\theta\rho_n^{sp} < t < 0}{\esssup}\int_{\widetilde{K}_n}[w\phi(x,t)]^2\,dx\right)^{\frac{s}{N+2s}}|A_n|^{1-\frac{N}{p(N+2s)}}\\
	&\leq
	\boldsymbol\gamma \theta^{-\frac{N+sp}{p(N+2s)}}
	\frac{b_0^n}{\varrho^\frac{s(N+sp)}{N+2s}}
	M |A_n|^{1+\frac{s}{N+2s}}.
\end{align*}
for a universal constant $b_0 > 1$. Setting
\[
\boldsymbol  Y_n=|A_n|/|Q_n|
\]
we get
\begin{equation*}
	\boldsymbol  Y_{n+1}
	\le
	\boldsymbol\gamma \boldsymbol b^n  \theta^{-\frac{N}{p(N+2s)}} Y_n^{1+\frac{1}{N+2s}},
\end{equation*}
for a constants $\boldsymbol\gamma > 0$ and $\boldsymbol b>1$ depending only on the data. The conclusion now follows from an application of the iteration lemma (\cref{geo_con}.) 
\end{proof}

\subsection{Expansion of positivity in time}\label{sec:exptime}

\begin{lemma}\label{Lm:3:1}
	Let $M>0$ and $\alpha \in(0,1)$. Then, there exist $\delta$ and $\epsilon$ in $(0,1)$,
	depending only on the data and $\alpha$, such that whenever $u$ is a globally non-negative weak supersolution to \cref{eq:main-eq} in $E_T$ satisfying
	\begin{equation*}
	\Big|\Big\{
		u(\cdot, t_o)\geq M
		\Big\}\cap K_{\varrho}(x_o)\Big|
		\geq\alpha \big|K_{\varrho}\big|,
	\end{equation*}
	then 
	\begin{equation}\label{Eq:3:1}
	\Big|\Big\{
	u(\cdot, t)\geq \epsilon M\Big\} \cap K_{\varrho}(x_o)\Big|
	\geq\frac{\alpha}2 |K_\varrho|
	\quad\mbox{ for all $t\in(t_o,t_o+\delta\varrho^{sp}]$.}
\end{equation}
Further the expansion parameter $\delta$ and the reduction parameter $\ep$ depends on the scaling parameter $\alpha$ as
    \begin{equation}
        \delta = B_2^{-1}\alpha^{p+1} \quad \text{and} \quad \ep = B_3^{-1}\alpha
    \end{equation}
    for a constants $B_2,B_3 >1$ depending only on the data. 
\end{lemma}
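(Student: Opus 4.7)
The plan is to prove \cref{Lm:3:1} by a propagation-of-measure argument based on the energy inequality, with the nonlocal tail handled via global non-negativity as in \cref{sec:tail}. After translating so that $(x_o, t_o) = (0, 0)$, I would apply \cref{Prop:energy} to $w = (u - M)_-$ on the cylinder $K_\varrho \times (0, t]$ for an arbitrary $t \in (0, \delta \varrho^{sp}]$, with a time-independent cutoff $\zeta = \zeta(x)$ supported in $K_\varrho$, identically $1$ on $K_{(1-\sigma)\varrho}$, and satisfying $|\nabla \zeta| \leq C/(\sigma \varrho)$; here $\sigma \in (0,1)$ is a small parameter to be chosen in terms of $\alpha$. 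The time-independence of $\zeta$ annihilates the $|\partial_t \zeta^p|$ term on the right-hand side.

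The initial term uses the hypothesis in an essential way: since $\mathfrak g_-(\cdot, M)$ vanishes on $\{u(\cdot, 0) \geq M\}$ (a set of measure $\geq \alpha |K_\varrho|$) and is bounded pointwise by $\mathfrak g_-(0, M) = M^p/p$ elsewhere,
\begin{equation*}
\int_{K_\varrho \times \{0\}} \zeta^p \,\mathfrak g_-(u, M) \,dx \leq \frac{M^p}{p}(1 - \alpha)|K_\varrho|.
\end{equation*}
For the first term on the right of the energy inequality, global non-negativity of $u$ yields $w \leq M$; combined with the standard bound $\int_{K_\varrho} |\zeta(x) - \zeta(y)|^p / |x-y|^{N+sp}\,dy \leq C (\sigma \varrho)^{-sp}$ and the interval length $\delta \varrho^{sp}$, this contributes at most $C \delta \sigma^{-sp} M^p |K_\varrho|$. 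The tail term is handled by the estimate of \cref{sec:tail} with $\vartheta = 1-\sigma$: using $w^{p-1} \leq M^{p-1}$ and the trivial bound $\iint w \zeta \leq M |K_\varrho| \delta \varrho^{sp}$, it contributes at most $C \delta \sigma^{-(N+sp)} M^p |K_\varrho|$.

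On the left-hand side, I would restrict to $\{u(\cdot, t) < \epsilon M\} \cap K_{(1-\sigma)\varrho}$ and use the explicit lower bound
\begin{equation*}
\mathfrak g_-(\epsilon M, M) = \frac{M^p}{p}\bigl(1 - p \epsilon^{p-1} + (p-1)\epsilon^p\bigr),
\end{equation*}
which is arbitrarily close to $M^p/p$ for $\epsilon$ small. After accounting for the cutoff loss $|K_\varrho \setminus K_{(1-\sigma)\varrho}| \leq C N \sigma |K_\varrho|$, I would obtain, uniformly for $t \in (0, \delta \varrho^{sp}]$,
\begin{equation*}
\frac{|\{u(\cdot, t) < \epsilon M\} \cap K_\varrho|}{|K_\varrho|} \leq (1 - \alpha) + C_1 \epsilon^{p-1} + C_2 \delta \sigma^{-(N+sp)} + C_3 \sigma.
\end{equation*}
Making the last three terms sum to at most $\alpha/2$ by choosing $\epsilon$, $\sigma$, and $\delta$ as polynomial powers of $\alpha$ then gives the conclusion, together with the quantitative dependencies $\epsilon = B_3^{-1} \alpha$ and $\delta = B_2^{-1} \alpha^{p+1}$ (the exact exponents being determined by how the three errors are balanced).

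The main obstacle is the sharp bookkeeping of the ratio $\mathfrak g_-(0, M)/\mathfrak g_-(\epsilon M, M)$: the two-sided equivalence of \cref{lem:g} introduces a ratio $\boldsymbol\gamma^2 > 1$ independent of $\epsilon$, which is not enough to preserve the factor $(1-\alpha)$ when $\alpha$ is small. Using instead the precise integral expression for $\mathfrak g_-$, and the fact that the ratio tends to $1$ as $\epsilon \to 0$, is what makes the scheme work; truncating at the level $k = M$ rather than $k = \epsilon M$ is therefore essential. The tail contribution is the standard nonlocal addendum and, through its interplay with the cutoff parameter $\sigma$, forces the polynomial dependence of $\delta$ on $\alpha$.
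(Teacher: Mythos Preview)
Your proposal is correct and follows essentially the same route as the paper: both apply the energy estimate of \cref{Prop:energy} at level $k=M$ with a time-independent cutoff, bound the initial term by $(1-\alpha)$ times the maximal value of $\mathfrak g_-$, control the gradient and tail contributions by $C\delta\sigma^{-q}$ for an appropriate power $q$, and then exploit that the ratio $\mathfrak g_-(0,M)/\mathfrak g_-(\epsilon M,M)\to 1$ as $\epsilon\to 0$ (the paper phrases this as $1+I_\epsilon$ with $I_\epsilon$ small) before choosing $\epsilon,\sigma,\delta$ as powers of $\alpha$. The only cosmetic discrepancy is that your cutoff should be supported in $K_{(1-\sigma/2)\varrho}$ rather than all of $K_\varrho$ (as the paper does) so that the tail estimate of \cref{sec:tail} with $\vartheta=1-\sigma/2$ actually applies; your invocation of that section shows you already have this in mind.
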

\begin{proof} 
Without loss of generality assume that $(x_o,t_o)=(0,0)$. For $k>0$ and $t>0$ set
\[
A_{k,\rho}(t) = [u(\cdot,t) < k] \cap K_{\rho}.
\]
By hypothesis, 
\[
|A_{M,\rho}(0)|\leq (1-\alpha)|K_{\rho}|.
\]
We consider the energy estimate for $(u-k)_{-}$ with $k=M$ over the cylinder $K_{\rho}\times(0,\delta\rho^{sp}]$ where $\delta>0$ will be chosen later. Note that $(u-k)_{-} \leq M$ globally because of the non-negativity of $u$. For $\sigma \in (0,1/8]$ to be chosen later, we take a cutoff function $\zeta = \zeta(x)$, nonnegative such that it is supported in $K_{(1-\sigma/2)\rho}$, $\zeta = 1$ on $K_{(1-\sigma)\rho}$ and $|\de \zeta| \leq 2(\sigma\rho)^{-1}$ to get
\begin{align*}
    \int_{K_\varrho\times\{t\}}\int_{u}^k |s|^{p-2}(s-k)_-\,ds \zeta^p&\,dx\\ 
    \leq \frac{\gamma M^p}{(\sigma \rho)^{sp}}\delta\rho^{sp}|K_{\rho}|
    &+ \int_{K_\varrho\times\{0\}}\int_{u}^k     |s|^{p-2}(s-k)_-\,ds\zeta^p\,dx
    \\
    &+ C\underset{\stackrel{t \in (0,\delta\rho^{sp}]}{ x\in \text{supp }\zeta}}{\esssup}\int_{K_{\rho}^c}\frac{(u-k)_{-}^{p-1}(y,t)}{|x-y|^{N+sp}}\,dy\int_{0}^{\delta\rho^{sp}}\int_{K_{\rho}} (u-k)_{-}(x,t)\zeta(x,t)\,dx\,dt
\end{align*}
Using $u\geq 0$ we get
\begin{align*}
	\int_{K_\varrho\times\{0\}}\int_{u}^k |s|^{p-2}(s-k)_-\,ds\zeta^p\,dx
	\leq 
	|A_{M,\rho}(0)|\int_{0}^{k} |s|^{p-2}(s-k)_-\,ds
	\leq
	(1-\alpha)|K_\varrho|\int_{0}^{k} |s|^{p-2}(s-k)_-\,ds.
\end{align*}
For the Tail term, we follow what was described in \cref{sec:tail} to get
\begin{align*}
\underset{\stackrel{t \in (0,\delta\rho^{sp}]}{ x\in \text{supp }\zeta}}{\esssup}\int_{K_{\rho}^c}\frac{(u-k)_{-}^{p-1}(y,t)}{|x-y|^{N+sp}}\,dy
\leq c\sigma^{-(N+sp)}\frac{M^{p-1}}{\varrho^{sp}}
\end{align*}
for a universal constant $c>0$.
We estimate from below
\begin{align*}
	\int_{K_\varrho\times\{t\}}\int_{u}^k |s|^{p-2}(s-k)_-\,ds \zeta^p\,dx
	\geq 
	\big|A_{k_\epsilon,(1-\sigma)\varrho}(t)\big| \int_{k_\epsilon}^k |s|^{p-2}(s-k)_-\,ds
\end{align*}
where 
\begin{equation*}
	k_\epsilon=\epsilon M
\end{equation*}
and $\epsilon\in(0,\frac{1}{2})$ will be chosen later. We now use
\[
\frac{1}{2}M\leq(1-\epsilon)M=k-k_\epsilon\leq |k_\epsilon|+|k|\leq 2M
\]
to estimate from below again
\begin{align}\label{prop_1}
	\int_{k_\epsilon}^k|s|^{p-2}(s-k)_-\,ds
	=
	\tfrac{1}{p-1}\, \mathfrak g_-(k_\epsilon,k)
	\geq 
	\tfrac{1}{\boldsymbol\gamma (p)}
	\big(|k_\epsilon|+|k|\big)^{p-2} (k-k_\epsilon)^2
	\geq
	\tfrac{1}{\boldsymbol\gamma (p)} M^p.
\end{align}
Next, we note that
\begin{align*}
	\big|A_{k_\epsilon,\varrho}(t)\big|
	&=
	\big|A_{k_\epsilon,(1-\sigma)\varrho}(t)\cup (A_{k_\epsilon,\varrho}(t)\setminus A_{k_\epsilon,(1-\sigma)\varrho}(t))\big|\\
	&\leq 
	\big|A_{k_\epsilon,(1-\sigma)\varrho}(t)\big|+|K_\varrho\setminus K_{(1-\sigma)\varrho}|\\
	&\leq
	\big |A_{k_\epsilon,(1-\sigma)\varrho}(t)\big|+N\sigma |K_\varrho|.
\end{align*}
Putting together the above estimates gives
\begin{align*}
	|A_{k_\epsilon,\varrho}(t)|
	\leq 
	\frac{\int_{0}^k |s|^{p-2}(s-k)_-\,ds }{\int_{k_\epsilon}^k |s|^{p-2}(s-k)_-\,ds }
	(1-\alpha)|K_\varrho|
	+
	\frac{\boldsymbol\gamma \delta}{\sigma^p}|K_\varrho| +N\sigma|K_\varrho|
\end{align*}
for a universal constant $\boldsymbol\gamma > 0$. We write
\[
\frac{\int_{0}^k |s|^{p-2}(s-k)_{-} \,ds }{\int_{k_\epsilon}^k |s|^{p-2}(s-k)_{-} \,ds } = 1 + \frac{\int_{0}^{k_\epsilon} |s|^{p-2}(s-k)_{-} \,ds}{\int_{k_\epsilon}^k  |s|^{p-2}(s-k)_{-} \,ds} = 1 + I_{\epsilon}.
\]
Using $|k_\epsilon|\leq M$ and applying \cref{lem:fusco}  we get
\[
	\int_{0}^{k_\epsilon} |\tau|^{p-2}(\tau-k)_-\,d\tau
	\leq 
	M\int_{0}^{k_\epsilon}|\tau|^{p-2}\,d\tau
	=
	M |s|^{p-2} s\Big|_{0}^{k_\epsilon}
	\leq
	\boldsymbol\gamma (p)M^p\epsilon.
\]
And so using \cref{prop_1} we obtain
\begin{align*}
	I_\epsilon
	\leq
	\boldsymbol\gamma (p) \epsilon.
\end{align*}
We choose $\epsilon\in (0,1)$ small enough such that
\begin{equation*}
	(1-\alpha)(1+\boldsymbol \gamma\epsilon)\leq 1-\tfrac{3}{4}\alpha.
\end{equation*}
and define $\sigma:=\frac{\alpha}{8N}$.
Finally, we choose $\delta\in (0,1)$ small enough so that
$$
	\frac{\boldsymbol \gamma\delta}{\sigma^p}\leq\frac{\alpha}{8}.
$$
The conclusion follows. 
\end{proof}

\subsection{A Shrinking Lemma}

\begin{lemma}\label{Lm:3:2}
Let $M>0$ and $\alpha \in(0,1)$. Suppose that $u$ is a globally non-negative weak supersolution to \cref{eq:main-eq} in $E_T$ satisfying
	\begin{equation*}
	\Big|\Big\{
		u(\cdot, t_o)\geq M
		\Big\}\cap K_{\varrho}(x_o)\Big|
		\geq\alpha \big|K_{\varrho}\big|,
	\end{equation*}
Let $Q=K_{\varrho}(x_o)\times\left(t_o,t_o+\delta\rho^{sp}\right]$ and let $\widehat Q=K_{4\varrho}(x_o)\times\left(t_o,t_o+\delta\rho^{sp}\right]\Subset E_T$. 
Then there exists $\boldsymbol \gamma>0$ depending only on the data such that for any positive integer $j$  we have 
\begin{equation*}
	\bigg|\bigg\{
	u\le\frac{\epsilon M}{2^{j+1}}\bigg\}\cap \widehat Q\bigg|
	\leq \frac{1}{\al}\left(\frac{\gamma}{2^{j}-1}\right)^{p-1}|\widehat Q|,\quad
\end{equation*}
\end{lemma}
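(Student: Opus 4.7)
The plan is to combine the pointwise-in-time measure information of \cref{Lm:3:1} with the abstract shrinking lemma \cref{lem:shrinking}, the latter applied on the enlarged ball $K_{4\varrho}(x_o)$. First I invoke \cref{Lm:3:1} with the given $\alpha$ and $M$ to produce $\epsilon,\delta\in(0,1)$ depending on $\alpha$ and the data such that
\begin{equation*}
\big|\{u(\cdot, t)\geq \epsilon M\}\cap K_\varrho(x_o)\big|\;\geq\;\tfrac{\alpha}{2}\,|K_\varrho| \qquad \text{for every } t\in J:=(t_o,t_o+\delta\varrho^{sp}].
\end{equation*}
Enlarging the ball from $K_\varrho$ to $K_{4\varrho}$ costs a factor $4^{-N}$ in the density and supplies the pointwise-in-time hypothesis of \cref{lem:shrinking} on $K_{4\varrho}$ with level $m:=\epsilon M$ and fraction $f:=\alpha/(2\cdot 4^N)$.

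The substance of the argument is to verify the smallness assumption \cref{smallness} of the shrinking lemma for every level $l:=\epsilon M/2^{j}$, $j\geq 1$, with (say) $A=1$. I test the energy estimate \cref{Prop:energy} with the negative truncation $(u-l)_-$ on the cylinder $K_{4\varrho}(x_o)\times J$, which is contained in $E_T$ by \cref{Eq:1:5}, against a time-independent cutoff $\zeta=\zeta(x)$ supported in $K_{7\varrho/2}$, identically $1$ on $K_{3\varrho}$, and satisfying $|\nabla\zeta|\leq C/\varrho$. The nonlocal cross-term on the left-hand side of \cref{Prop:energy} is exactly the integral appearing in \cref{smallness}, so it only remains to bound each term on the right-hand side by $Cl^p\varrho^{-sp}|\widehat Q|$.

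Global nonnegativity of $u$ yields $(u-l)_-\leq l$ and hence $\mathfrak g_-(u,l)\leq C l^p$ pointwise. The Dirichlet-type term is then controlled via the Lipschitz bound on $\zeta$ by $Cl^p\varrho^{-sp}|K_{4\varrho}||J|$. The time-derivative term vanishes since $\zeta$ is time-independent. The initial-data contribution $\int_{K_{4\varrho}\times\{t_o\}}\zeta^p\,\mathfrak g_-(u,l)\,dx\leq Cl^p|K_{4\varrho}|$ is rewritten, using $|\widehat Q|=\delta\varrho^{sp}|K_{4\varrho}|$, as $C\delta^{-1}l^p\varrho^{-sp}|\widehat Q|$. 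The tail term is handled exactly as in \cref{sec:tail}: global nonnegativity gives $\esssup_{x\in\supp\zeta}\int_{K_{4\varrho}^c}(u-l)_-^{p-1}/|x-y|^{N+sp}\,dy\leq Cl^{p-1}/\varrho^{sp}$, and multiplying by $\iint_{K_{4\varrho}\times J}(u-l)_-\zeta\,dx\,dt\leq l|K_{4\varrho}||J|$ produces once more $Cl^p\varrho^{-sp}|K_{4\varrho}||J|$. Summing, \cref{smallness} holds with a constant $C_1$ depending on $\delta$ and the data, and \cref{lem:shrinking} then delivers the claimed inequality with $\boldsymbol\gamma$ absorbing both $C_1$ and the factor $2\cdot 4^N$ arising from $1/f$.

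The main obstacle I anticipate is the initial-datum contribution: it cannot be reduced by choosing $j$ large and only matches the target $Cl^p\varrho^{-sp}|\widehat Q|$ at the cost of a factor $\delta^{-1}$. Since $\delta$ is already fixed in terms of $\alpha$ and the data at the stage of \cref{Lm:3:1}, this $\delta^{-1}$ dependence is harmless and simply gets absorbed into $\boldsymbol\gamma$. A minor technicality is ensuring that $\supp\zeta$ is strictly interior to $K_{4\varrho}$ (hence the choice of radii $3\varrho$ and $7\varrho/2$) so that the tail bound of \cref{sec:tail} applies with a universal constant.
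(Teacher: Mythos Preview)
Your proposal is correct and follows essentially the same route as the paper: invoke \cref{Lm:3:1} for the slicewise measure information, apply the energy estimate of \cref{Prop:energy} to $(u-l)_-$ with a time-independent cutoff, bound each right-hand term by $C\,l^p\varrho^{-sp}|\widehat Q|$ using global nonnegativity and the tail computation of \cref{sec:tail} (the initial-datum term picking up the harmless $\delta^{-1}$), and then feed the resulting cross-term bound into \cref{lem:shrinking} on $K_{4\varrho}$ with $f=\alpha/(2\cdot 4^N)$. The only cosmetic difference is that the paper runs the energy estimate on the larger cube $K_{8\varrho}$ with a cutoff equal to $1$ on $K_{4\varrho}$ and supported in $K_{6\varrho}$, whereas you work directly on $K_{4\varrho}$; since the cross term in \cref{Prop:energy} does not involve $\zeta$, either choice yields the needed smallness over $K_{4\varrho}\times K_{4\varrho}$.
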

\begin{proof}
Without loss of generality assume that $(x_o,t_o)=(0,0)$. We write the energy estimates over the cylinder 
\[
K_{8\rho}\times (0,\delta\rho^{sp}]
\]
for the functions $(u-k_j)_-$ with the choice of levels 
\[
k_j =  \frac{\epsilon M}{2^j} \qquad j \geq 1
\] 
We choose a test function $\zeta = \zeta(x)$ such that $\zeta\equiv 1$ on $K_{4\varrho}$, it is supported in $K_{6\varrho}$ and
\[
|\de \zeta|\leq \frac{1}{2\rho}
\] 
Using $(u-k_j)_{-} \leq \epsilon M2^{-j}$ globally, by our choice of the test function we get
\begin{align*}
\int_{0}^{\delta\varrho^{sp}}\int\limits_{K_{4\varrho}}|(u-k_j)_-(x,t)|\int\limits_{K_{4\varrho}}\frac{|(u-k_j)_+(y,t)|}{|x-y|^{N+sp}}\,dx\,dy\,dt
\\
\leq C\frac{1}{\varrho^{sp}}\left(\frac{\epsilon M}{2^j}\right)^{p}|\widehat Q|
     +C\frac{\epsilon M}{2^j}|\widehat Q|\underset{\stackrel{t \in (0,\delta\rho^{sp})}{ x\in \text{supp }\zeta}}{\esssup}\int_{K_{8\rho}^c}\frac{(u-k_j)_{-}(y,t)}{|x-y|^{N+sp}}\,dy
     \\
     + C\int_{K_{8\varrho}\times\{0\}} \mathfrak g_- (u,k_j) \,dx
\end{align*}
where $C>0$ is a universal constant. We recall the section on Tail estimates (\cref{sec:tail}) to get
\begin{align*}
\underset{\stackrel{t \in (0,\delta\rho^{sp}]}{ x\in \text{supp }\zeta}}{\esssup}\int_{K_{8\rho}^c}\frac{(u-k_j)_{-}^{p-1}(y,t)}{|x-y|^{N+sp}}\,dy 
\leq c\frac{1}{\varrho^{sp}}\left(\frac{\epsilon M}{2^j}\right)^{p-1}
\end{align*}
for a universal constant $c>0$. Next, we estimate the time term as follows. From \cref{lem:g} we get 
\begin{equation*}
	\mathfrak g_- (u,k_j)
	\le
	\boldsymbol\gamma \big(|u|+|k_j|\big)^{p-2}(u-k_j)_-^2.
\end{equation*}
For $p\geq 2$, we use $(u-k_j)_-\le |u|+|k_j|$, $u\geq 0$ to get
\begin{equation*}
	\mathfrak g_- (u,k_j)
	\leq
	\boldsymbol\gamma \big(|u|+|k_j|\big)^p \chi_{\{u\leq k_j\}}
	\leq
	\boldsymbol\gamma\left(\frac{\epsilon M}{2^j}\right)^{p}.
\end{equation*}
For $1<p<2$, we use $(u-k_j)_-\leq |u|+|k_j|$ and $u\ge 0$ to get
\begin{align*}
	\mathfrak g_- (u,k_j)
	\leq
	\boldsymbol\gamma (u-k_j)_-^p
	\leq
	\boldsymbol\gamma\left(\frac{\epsilon M}{2^j}\right)^{p},
\end{align*}
for a constant $\boldsymbol\gamma$ depending only on $p$. So for all $1<p<\infty$ we get
\begin{equation*}
	\int_{K_{8\varrho}\times\{0\}}\mathfrak g_- (u,k_j)
	\leq
	 \frac{\boldsymbol \gamma}{\delta\varrho^{sp}}
	 \left(\frac{\epsilon M}{2^j}\right)^{p} |\widehat Q|
\end{equation*}
Putting the above estimates together and recalling that $\delta \in (0,1)$ we get
\[
\int_{0}^{\delta\varrho^{sp}}\int\limits_{K_{4\varrho}}|(u-k_j)_-(x,t)|\int\limits_{K_{4\varrho}}\frac{|(u-k_j)_+(y,t)|}{|x-y|^{N+sp}}\,dx\,dy\,dt \leq \frac{\boldsymbol \gamma}{\delta\varrho^{sp}}
	 \left(\frac{\epsilon M}{2^j}\right)^{p} |\widehat Q|
\]
We now invoke Lemma \ref{lem:shrinking} with Step 1 to conclude that
\[
|\{u < \epsilon M2^{-(j+1)}\}\cap\widehat Q)| \leq \frac{1}{\al}\left(\frac{C}{2^{j}-1}\right)^{p-1}|\widehat Q|
\]
for a constant $C>0$ depending only on the data.
\end{proof}

\subsection{Proof of expansion of positivity}
Without loss of generality assume that $(x_0,t_0) = (0,0)$. By $\delta,\epsilon\in(0,1)$ and $\boldsymbol \gamma>0$ we denote the corresponding constants from \cref{Lm:3:1} and \cref{Lm:3:2} depending on the data and $\alpha$ and by $\nu\in(0,1)$ we denote the constant from \cref{Lm:3:3} applied with $\theta=\delta$. Then, $\nu$ depends on the data and $\alpha$. 
Next, we choose an integer $j \geq 1$ in such a way that 
\[
\left(\frac{\gamma}{2^{j}-1}\right)^{p-1} \leq \nu
\]
Then, $j$ depends only on the data and $\alpha$. Applying \cref{Lm:3:1} and \cref{Lm:3:2} we get
\begin{equation*}
	\bigg|\bigg\{
	u\leq \frac{\epsilon M}{2^{j+1}}\bigg\}\cap \widehat Q\bigg|
	\leq
	\nu|\widehat Q|,\quad
\end{equation*}
where $\widehat Q=K_{4\varrho}\times\left(0,\delta\varrho^{sp}\right]$
(note that $0 < \epsilon < 1$.) Finally, we apply \cref{Lm:3:3} with $M$ replaced by $\frac{\epsilon M}{2^{j+1}}$ to get 
\begin{equation*}
	u\geq \frac{\epsilon M}{2^{j+2}}
	\quad
	\mbox{a.e.~in $K_{2\rho}\times \left(\delta\left(\frac{1}{2}\varrho\right)^{sp
	},\delta\varrho^{sp} \right]$.}
\end{equation*}
We set
$$\eta=\frac{\epsilon}{2^{j+2}}.$$ The conclusion follows. 
\begin{remark}\label{rem:polydep}
    Note the tracing the dependency on $\al$ we get that the dependence of $\eta$ on $\alpha$ is polynomial. This is \textit{not} true in the local case and is a feature of nonlocal equations. On the other hand, note that $\delta$ also has a polynomial dependence on $\al$ just like in the local setup. 
\end{remark}
\section{Weak Harnack Estimate}

\subsection{Power Dependence Free Positivity Expansion}
We now remove the power like dependency of $\delta$ on $\al$ in the expansion of positivity using the clustering lemma. 
\begin{proposition}\label{Prop:nopower}
	Let $u$ be a globally nonnegative weak super-solution to \cref{eq:main-eq} in $E_T$.
	Suppose for some $(x_o,t_o)\in E_T$, $M>0$, $\alpha \in(0,1)$ and $\varrho>0$ we have \cref{Eq:1:5} and
	\begin{equation*}
		\left|\left\{u(\cdot, t_o)\geq M\right\}\cap K_\varrho(x_o)\right|
		\geq
		\alpha \big|K_\varrho\big|.
	\end{equation*}
Then there exist constants $\delta_0$ and $\eta_0$ in $(0,1)$ and $d>1$ depending only on the data 
such that 
\begin{equation*}
	u\geq\eta_0\alpha^{d} M
	\quad
	\mbox{a.e.~in $K_{2\varrho}(x_o)\times\big(t_o+\delta_0(\frac{1}{2}\varrho)^p,t_o+\delta_0\varrho^{sp}\big],$}
\end{equation*}

\end{proposition}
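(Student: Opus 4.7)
The plan is to use the Clustering Lemma of \cref{sec:clust} to upgrade the measure-theoretic density $\alpha$ to a near-full density in a small ball, invoke \cref{Lm:3:3} to produce pointwise positivity there, and then iterate \cref{Prop:1:1} with $\alpha=1$---for which all of its constants become universal---to propagate positivity to $K_{2\varrho}(x_o)$. The polynomial factor $\alpha^d$ will emerge purely from the number of doubling iterations: each doubling costs a universal factor $\eta_\star<1$, and the Clustering Lemma furnishes a radius $\sigma$ polynomial in $\alpha$, so $K\sim\log_2(1/\sigma)$ doublings produce cumulative loss $\eta_\star^K\sim\sigma^{\log_2(1/\eta_\star)}\sim\alpha^d$ with $d$ universal.

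The first step is \cref{Lm:3:1}, which yields $|\{u(\cdot,t)\ge\epsilon_1 M\}\cap K_\varrho(x_o)|\ge\tfrac{\alpha}{2}|K_\varrho|$ for every $t\in(t_o,t_o+\delta_1\varrho^{sp}]$ with $\epsilon_1\sim\alpha$ and $\delta_1\sim\alpha^{p+1}$. I then use the energy estimate \cref{Prop:energy} applied to $(u-\epsilon_1 M)_-$ on this cylinder: the bulk and time-derivative terms contribute $(\epsilon_1 M)^p\varrho^{N-sp}\delta_1\varrho^{sp}$, the tail term is handled by the global non-negativity argument of \cref{sec:tail}, and the initial-time term is controlled by $(\epsilon_1 M)^p\varrho^N$. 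An integral mean value in $t$ over $(t_o,t_o+\delta_1\varrho^{sp}]$ picks a slice $t^\star$ at which $[(u(\cdot,t^\star)-\epsilon_1 M)_-/(\epsilon_1 M)]_{s,p;K_\varrho}\le\gamma(\alpha)\,\varrho^{(N-sp)/p}$, with $\gamma(\alpha)$ polynomial in $\alpha^{-1}$ (the $\alpha$-dependence coming from dividing the initial-time term by the short length $\delta_1\varrho^{sp}$), and the density $\tfrac{\alpha}{2}$ still holds at $t^\star$. The Clustering Lemma applied to an appropriate truncation of $u(\cdot,t^\star)/(\epsilon_1 M)$ then produces $y\in K_\varrho(x_o)$ and $\sigma\sim\alpha^{p+1}/\gamma(\alpha)^p$, still polynomial in $\alpha$, such that $|\{u(\cdot,t^\star)\ge\lambda\epsilon_1 M\}\cap K_{\sigma\varrho}(y)|>(1-\delta_\star)|K_{\sigma\varrho}|$ for a universal $\lambda\in(0,1)$ and a universal $\delta_\star$ that I may take as small as I wish.

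Rerunning \cref{Lm:3:1} from this near-full density at $t^\star$ (a straightforward refinement of its proof, since the initial deficit $\delta_\star$ is universal and can be chosen below any prescribed threshold) yields density $>1-\bar\nu$ above level $\epsilon_2\lambda\epsilon_1 M$ on $K_{\sigma\varrho}(y)\times(t^\star,t^\star+\delta_2(\sigma\varrho)^{sp}]$ with $\epsilon_2,\delta_2,\bar\nu$ universal. Choosing $\bar\nu$ below the smallness threshold of \cref{Lm:3:3} at scaling $\theta=\delta_2$ delivers $u\ge\tfrac{1}{2}\epsilon_2\lambda\epsilon_1 M$ a.e.\ on $K_{\sigma\varrho/2}(y)\times I^\star$. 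I then iterate \cref{Prop:1:1} with $\alpha=1$: each step doubles the spatial ball, loses only a universal $\eta_\star$ in the positivity level, and advances time by a universal multiple of (current radius)$^{sp}$, a geometric series dominated by its last term. After $K\sim\log_2(1/\sigma)$ iterations the ball covers $K_{2\varrho}(x_o)$, the accumulated loss is $\eta_\star^K\sim\alpha^d$, and the total elapsed time is a universal multiple of $\varrho^{sp}$; a final time-extension along the lines of \cref{Rem:1.1} adjusts the slab to exactly $(t_o+\delta_0(\varrho/2)^{sp},t_o+\delta_0\varrho^{sp}]$ with $\delta_0$ universal.

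The main obstacle is the seminorm-at-a-time-slice hypothesis of the Clustering Lemma: the $\alpha$-dependence of $\gamma(\alpha)$ produced by integral mean value over an interval of length $\delta_1\sim\alpha^{p+1}$ must combine with the Clustering Lemma's rule $\sigma\sim\alpha^{p+1}/\gamma^p$ to leave $\sigma$ polynomial in $\alpha$. This is a careful bookkeeping exercise among the bulk, tail, time-derivative, and initial-time contributions in \cref{Prop:energy}, each of which must be compared against $\delta_1\varrho^{sp}$ so that the resulting bound is of the form (universal power of $\alpha^{-1}$)$\cdot\varrho^{N-sp}$, and no contribution introduces an uncontrolled super-polynomial dependence on $\alpha$.
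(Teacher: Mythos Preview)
Your proposal is correct and follows essentially the same route as the paper: \cref{Lm:3:1} for persistence in time, an energy estimate plus integral mean value to produce a time slice with controlled $W^{s,p}$ seminorm, the clustering lemma to find a small ball of near-full density, and then repeated application of \cref{Prop:1:1} with a fixed universal fraction to double out to $K_{2\varrho}$, paying a factor $\eta_\star$ at each of $\sim\log_2(1/\sigma)$ steps so that the total loss is polynomial in $\alpha$.

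Two minor simplifications relative to your outline, which the paper exploits: first, the paper writes the energy estimate for $(u-M)_-$ rather than $(u-\epsilon_1 M)_-$ and then rescales by setting $v=(1-(u-M)_-/M)/\epsilon$; this makes the set $\{v>1\}$ genuinely coincide with $\{u>\epsilon M\}$, avoiding the ``appropriate truncation'' manoeuvre you allude to (your function $\min(u/(\epsilon_1 M),1)$ never exceeds $1$, so you would need a small shift to meet the strict-inequality hypothesis of the clustering lemma). Second, your intermediate re-run of \cref{Lm:3:1} followed by \cref{Lm:3:3} to reach pointwise positivity before iterating \cref{Prop:1:1} with $\alpha=1$ is unnecessary: since the clustering lemma already yields density $\ge 1/2$ on $K_{\sigma\varrho}(y)$ at a single time, one can invoke \cref{Prop:1:1} directly with $\alpha=1/2$, whose constants are already universal. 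This shortens the argument by one step without changing the structure.
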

\begin{proof}
  Without loss of generality assume that $(x_0,t_0) = (0,0)$. By \cref{Lm:3:1} there exist constants $B_2,B_2 > 1$ depending only on data such that 
  \[
    \delta = B_2^{-1}\alpha^{p+1} \quad \text{and} \quad \ep = B_3^{-1}\alpha
  \]   
  and 
  \begin{equation}\label{eq:measinfo}
    |\{u(\cdot,t)>\ep M\} \cap K_{\rho}| \geq \frac{\al}{2}|K_{\rho}|\quad \forall \quad 0<t<\delta\rho^{sp}.     
  \end{equation}

  Let $Q = K_{\rho} \times (\delta(\frac{1}{2}\rho)^{sp},\delta\rho^{sp}]$ and $Q' = K_{2\rho}\times(0,\delta\rho^{sp}]$. Let  $0 \leq \xi \leq 1$ be a cutoff function which is one on $Q$, vanishes on the parabolic boundary $\de_pQ'$ such that
  \[
    0\leq \xi_t \leq \frac{4^p}{\delta\rho^{sp}} \quad \text{ and } |\de\xi| \leq \frac{4}{\rho}
  \]
  Recalling that $u$ is globally non-negative, we apply the energy estimates for $(u-M)_{-}$ over the cylinders $Q \subset Q'$ with the cutoff $\xi$ to get
  \[
  \int_{\delta(\frac{1}{2}\rho)^{sp}}^{\delta\rho^{sp}}\int_{K_{\rho}}\int_{K_{\rho}}\frac{|(u-M)_{-}(x,t)-(u-M)_{-}(y,t)|^p}{|x-y|^{N+sp}} \,dy\,dx\,dt \leq \gamma\frac{M^p}{\delta\rho^{sp}}|Q|.
  \]
  We change variables
  \[
    X = \frac{x}{\rho} \quad  Y = \frac{Y}{\rho} \quad T = \frac{t}{\delta\rho^{sp}} \quad w = \frac{(u-M)_{-}}{M}
  \]
  and recall the dependence of $\delta$ on $\al$ to get
  \[
  \int_{\frac{1}{2^{sp}}}^{1}\int_{K_1}\int_{K_1}\frac{|w(X,T)-w(Y,T)|^p}{|X-Y|^{N+sp}} \,dY\,dX\,dT \leq \frac{\gamma}{\al^{p+1}}.
  \]
  where $\gamma$ is a constant that depends only on the data. Setting $v = (1-w)/\ep$, we get
  \[
  \left(\int_{\frac{1}{2^{sp}}}^{1}\int_{K_1}\int_{K_1}\frac{|v(X,T)-v(Y,T)|^p}{|X-Y|^{N+sp}} \,dY\,dX\,dT\right)^{\frac{1}{p}} \leq \frac{\gamma}{\ep\al^{\frac{p+1}{p}}}.
  \]
  whereas \cref{eq:measinfo} becomes
  \[
  |\{v(\cdot,T)>1\}\cap K_1| \geq \frac{\al}{2}|K_1| \quad \forall \quad \frac{1}{2^{sp}} < T < 1. 
  \]
  So we can find a $T_1 \in (2^{-sp},1]$ such that
  \[
   \left(\int_{K_1}\int_{K_1}\frac{|v(x,T_1)-v(y,T_1)|}{|x-y|^{N+s}} \,dy\,dx\right)^{\frac{1}{p}} \leq \frac{\gamma}{\ep\al^{\frac{p+1}{p}}}.
  \]
  and
  \[
  |\{v(\cdot,T_1)>1\}\cap K_1| \geq \frac{\al}{2}|K_1|.
  \]
  Recalling the clustering lemma along with the dependence of the reducing parameter and $\ep$ on $\al$ shows that there exist $X_0 \in K_1$ and $\sigma = C^{-1}\al^{3p+2}$ for some constant $C>1$ depending only on data such that
  \[
   \{v(\cdot,T_1) > \frac{1}{2}\}\cap K_{\sigma}(X_0)| \geq \frac{1}{2}|K_{\sigma}|.
  \]
  In the original coordinates we get
  \[
  \{u(\cdot,t_1) > \frac{1}{2}\ep M\}\cap K_{\sigma\rho}(x_0)| \geq \frac{1}{2}|K_{\sigma\rho}|.
  \]
  for some $x_0 \in K_{\rho}$ and $\delta2^{-sp}\rho^{sp}<t_1<\delta\rho^{sp}$. We now apply the expansion of positivity \cref{Prop:1:1} repeatedly with $\al = 1/2$ to obtain $\eta'$ and $\delta'$ in $(0,1)$ depending only on the data such that for $n \geq 1$
  \[
  u(\cdot,t) \geq \frac{1}{2}\ep\eta'^nM \quad \text{a.e. in } K_{2^n\sigma\rho}(x_0)
  \]
  for all
  \[
  t_{n-1} + \delta'\left(2^{n-1}\sigma\rho\right)^{sp} < t < t_n \coloneqq t_{n-1}+ \delta'\left(2^{n}\sigma\rho\right)^{sp}.
  \]
  To get the expansion of positivity to the $K_{2\rho}$ we take
  \[
  2^n\sigma = 2 
  \]
  which implies
  \[
  n = 1 + \ln \sigma^{-\frac{1}{\ln 2}} = 1 + \log_{\frac{1}{\eta'}}\sigma^{\frac{\ln \eta'}{\ln 2}}.
  \]
  Thus recalling the polynomial dependence of $\ep$ and $\sigma$ on the parameter $\al$  we obtain the existence of $\eta_0 \in (0,1)$ and $d>1$ depending only on the data such the the conclusion holds. 
\end{proof}
\subsection{Proof of Weak Harnack}
Without loss of generality assume that $(x_0,t_0) = (0,0)$. We fix the constants $d>1$ and $\eta_0, \delta_0 \in (0,1)$ from \cref{Prop:nopower}. Recall that they depend only on the data. Let 
\[
\lambda_0 \quad = \underset{K_{2\rho}\times(\delta_0(\rho/2)^{sp},\delta_0\rho^{sp}]}{\essinf} u .
\]
For $0<q<1/d$ we compute
\[
\int_{K_{\rho}}u^q(\cdot,0) \,dx \leq \lambda_0^q|K_{\rho}| + q\int_{\lambda_0}^{\infty}M^{q-1}|\{u(\cdot,0)>M\}\cap K_{\rho}| \,dM. 
\]
From \cref{Prop:nopower} we get
\[
\lambda_0 \geq \eta_0M \left(\frac{|\{u(\cdot,0)>M\}\cap K_{\rho}|}{|K_{\rho}|}\right)^d.
\]
It follows that
\[
q\int_{\lambda_0}^{\infty}M^{q-1}|\{u(\cdot,0)>M\}\cap K_{\rho}| \,dM \leq \frac{q\lambda_0^{\frac{1}{d}}}{\eta_0^{\frac{1}{d}}}|K_{\rho}|\int_{\lambda_0}^{\infty}M^{q-\frac{1}{d}-1}\,dM.
\]
Thus
\[
\fint_{K_{\rho}}u^q(\cdot,0) \,dx \leq \lambda_0^q\left(1+\frac{q}{\eta_0^{\frac{1}{d}}(\frac{1}{d}-q)}\right).
\]
\section*{Acknowledgments} 
The author would like to thank Karthik Adimurthi, Agnid Banerjee and Vivek Tewary for helpful comments and suggestions. The author was supported by the Department of Atomic Energy,  Government of India, under	project no.  12-R\&D-TFR-5.01-0520. 

\bibliography{main}

\begin{thebibliography}{47}
\providecommand{\natexlab}[1]{#1}
\providecommand{\url}[1]{\texttt{#1}}
\expandafter\ifx\csname urlstyle\endcsname\relax
  \providecommand{\doi}[1]{doi: #1}\else
  \providecommand{\doi}{doi: \begingroup \urlstyle{rm}\Url}\fi

\bibitem[Acerbi and Fusco(1989)]{ACERBI1989115}
E~Acerbi and N~Fusco.
\newblock Regularity for minimizers of non-quadratic functionals: The case 1 <
  p < 2.
\newblock \emph{Journal of Mathematical Analysis and Applications},
  140\penalty0 (1):\penalty0 115--135, 1989.
\newblock ISSN 0022-247X.
\newblock \doi{https://doi.org/10.1016/0022-247X(89)90098-X}.
\newblock URL
  \url{https://www.sciencedirect.com/science/article/pii/0022247X8990098X}.

\bibitem[Adimurthi et~al.(2022)Adimurthi, Prasad, and Tewary]{aptHolder}
Karthik Adimurthi, Harsh Prasad, and Vivek Tewary.
\newblock Local {H\"{o}lder} regularity for nonlocal parabolic $p$-{Laplace}
  equations, 2022.
\newblock URL \url{https://arxiv.org/abs/2205.09695}.

\bibitem[Adimurthi et~al.(2023)Adimurthi, Prasad, and
  Tewary]{adimurthi2023holder}
Karthik Adimurthi, Harsh Prasad, and Vivek Tewary.
\newblock H{\"o}lder regularity for fractional p-{Laplace} equations.
\newblock \emph{Proceedings-Mathematical Sciences}, 133\penalty0 (1):\penalty0
  14, 2023.
\newblock \doi{10.1007/s12044-023-00734-6}.
\newblock URL
  \url{https://link.springer.com/article/10.1007/s12044-023-00734-6}.

\bibitem[Alonso et~al.(2008)Alonso, Santillana, and Dawson]{ALONSO2008}
R.~Alonso, M.~Santillana, and C.~Dawson.
\newblock On the diffusive wave approximation of the shallow water equations.
\newblock \emph{European Journal of Applied Mathematics}, 19\penalty0
  (5):\penalty0 575--606, October 2008.
\newblock \doi{10.1017/s0956792508007675}.
\newblock URL \url{https://doi.org/10.1017/s0956792508007675}.

\bibitem[Banerjee et~al.(2021)Banerjee, Garain, and Kinnunen]{Banerjee2021}
Agnid Banerjee, Prashanta Garain, and Juha Kinnunen.
\newblock Some local properties of subsolution and supersolutions for a doubly
  nonlinear nonlocal p-{Laplace} equation.
\newblock \emph{Annali di Matematica Pura ed Applicata (1923 -)}, 201\penalty0
  (4):\penalty0 1717--1751, November 2021.
\newblock \doi{10.1007/s10231-021-01177-4}.
\newblock URL \url{https://doi.org/10.1007/s10231-021-01177-4}.

\bibitem[Banerjee et~al.(2022)Banerjee, Garain, and Kinnunen]{Banerjee2022}
Agnid Banerjee, Prashanta Garain, and Juha Kinnunen.
\newblock Lower semicontinuity and pointwise behavior of supersolutions for
  some doubly nonlinear nonlocal parabolic p-{Laplace} equations.
\newblock \emph{Communications in Contemporary Mathematics}, June 2022.
\newblock \doi{10.1142/s0219199722500328}.
\newblock URL \url{https://doi.org/10.1142/s0219199722500328}.

\bibitem[B\"{o}gelein et~al.(2021)B\"{o}gelein, Duzaar, and Liao]{Bgelein2021}
Verena B\"{o}gelein, Frank Duzaar, and Naian Liao.
\newblock On the {H\"{o}lder} regularity of signed solutions to a doubly
  nonlinear equation.
\newblock \emph{Journal of Functional Analysis}, 281\penalty0 (9):\penalty0
  109173, November 2021.
\newblock \doi{10.1016/j.jfa.2021.109173}.
\newblock URL \url{https://doi.org/10.1016/j.jfa.2021.109173}.

\bibitem[B\"{o}gelein et~al.(2022)B\"{o}gelein, Duzaar, Liao, and
  Sch\"{a}tzler]{liaopart2}
Verena B\"{o}gelein, Frank Duzaar, Naian Liao, and Leah Sch\"{a}tzler.
\newblock On the {H\"{o}lder} regularity of signed solutions to a doubly
  nonlinear equation. part {II}.
\newblock \emph{Revista Matem{\'{a}}tica Iberoamericana}, May 2022.
\newblock \doi{10.4171/rmi/1342}.
\newblock URL \url{https://doi.org/10.4171/rmi/1342}.

\bibitem[Caffarelli(2012)]{Caffarelli2012}
Luis Caffarelli.
\newblock Non-local diffusions, drifts and games.
\newblock In \emph{Nonlinear Partial Differential Equations}, pages 37--52.
  Springer Berlin Heidelberg, 2012.
\newblock \doi{10.1007/978-3-642-25361-4\_3}.
\newblock URL \url{https://doi.org/10.1007/978-3-642-25361-4_3}.

\bibitem[Caffarelli et~al.(2011)Caffarelli, Chan, and
  Vasseur]{caffarelli2011regularity}
Luis Caffarelli, Chi~Hin Chan, and Alexis Vasseur.
\newblock Regularity theory for parabolic nonlinear integral operators.
\newblock \emph{Journal of the American Mathematical Society}, 24\penalty0
  (3):\penalty0 849--869, 2011.

\bibitem[Castro et~al.(2014)Castro, Kuusi, and Palatucci]{DiCastro2014}
Agnese~Di Castro, Tuomo Kuusi, and Giampiero Palatucci.
\newblock Nonlocal {Harnack} inequalities.
\newblock \emph{Journal of Functional Analysis}, 267\penalty0 (6):\penalty0
  1807--1836, September 2014.
\newblock \doi{10.1016/j.jfa.2014.05.023}.
\newblock URL \url{https://doi.org/10.1016/j.jfa.2014.05.023}.

\bibitem[Castro et~al.(2016)Castro, Kuusi, and Palatucci]{DiCastro2016}
Agnese~Di Castro, Tuomo Kuusi, and Giampiero Palatucci.
\newblock Local behavior of fractional p-minimizers.
\newblock \emph{Annales de l{\textquotesingle}Institut Henri Poincar{\'{e}} C,
  Analyse non lin{\'{e}}aire}, 33\penalty0 (5):\penalty0 1279--1299, October
  2016.
\newblock \doi{10.1016/j.anihpc.2015.04.003}.
\newblock URL \url{https://doi.org/10.1016/j.anihpc.2015.04.003}.

\bibitem[Cozzi(2017)]{Cozzi2017}
Matteo Cozzi.
\newblock Regularity results and {Harnack} inequalities for minimizers and
  solutions of nonlocal problems: A unified approach via fractional {De}
  {Giorgi} classes.
\newblock \emph{Journal of Functional Analysis}, 272\penalty0 (11):\penalty0
  4762--4837, June 2017.
\newblock \doi{10.1016/j.jfa.2017.02.016}.
\newblock URL \url{https://doi.org/10.1016/j.jfa.2017.02.016}.

\bibitem[DiBenedetto and Vespri(1995)]{clusterarma}
E.~DiBenedetto and V.~Vespri.
\newblock On the singular equation {$\beta(u)_t=\Delta u$}.
\newblock \emph{Archive for Rational Mechanics and Analysis}, 132\penalty0
  (3):\penalty0 247--309, 1995.
\newblock \doi{10.1007/bf00382749}.
\newblock URL \url{https://doi.org/10.1007/bf00382749}.

\bibitem[DiBenedetto(1993)]{DiBenedetto1993}
Emmanuele DiBenedetto.
\newblock \emph{Degenerate Parabolic Equations}.
\newblock Springer New York, 1993.
\newblock \doi{10.1007/978-1-4612-0895-2}.
\newblock URL \url{https://doi.org/10.1007/978-1-4612-0895-2}.

\bibitem[DiBenedetto et~al.(2006)DiBenedetto, Gianazza, and
  Vespri]{clusteringl1}
Emmanuele DiBenedetto, Ugo Gianazza, and Vincenzo Vespri.
\newblock Local clustering of the non-zero set of functions in {$W^{1,1}(E)$},
  journal = {Rendiconti Lincei - Matematica e Applicazioni}.
\newblock pages 223--225, 2006.
\newblock \doi{10.4171/rlm/465}.
\newblock URL \url{https://doi.org/10.4171/rlm/465}.

\bibitem[DiBenedetto et~al.(2008)DiBenedetto, Gianazza, and
  Vespri]{actaharnack}
Emmanuele DiBenedetto, Ugo Gianazza, and Vincenzo Vespri.
\newblock Harnack estimates for quasi-linear degenerate parabolic differential
  equations.
\newblock \emph{Acta Mathematica}, 200\penalty0 (2):\penalty0 181--209, 2008.
\newblock \doi{10.1007/s11511-008-0026-3}.
\newblock URL \url{https://doi.org/10.1007/s11511-008-0026-3}.

\bibitem[Dibenedetto et~al.(2008)Dibenedetto, Gianazza, and
  Vespri]{dukeharnack}
Emmanuele Dibenedetto, Ugo Gianazza, and Vincenzo Vespri.
\newblock Subpotential lower bounds for nonnegative solutions to certain
  quasi-linear degenerate parabolic equations.
\newblock \emph{Duke Mathematical Journal}, 143\penalty0 (1), May 2008.
\newblock \doi{10.1215/00127094-2008-013}.
\newblock URL \url{https://doi.org/10.1215/00127094-2008-013}.

\bibitem[DiBenedetto et~al.(2010)DiBenedetto, Gianazza, and
  Vespri]{anallisnsharnack}
Emmanuele DiBenedetto, Ugo Gianazza, and Vincenzo Vespri.
\newblock Forward, backward and elliptic harnack inequalities for non-negative
  solutions to certain singular parabolic partial differential equations.
\newblock \emph{Annali della Scuola Normale Superiore di Pisa-Classe di
  Scienze}, 9\penalty0 (2):\penalty0 385--422, 2010.

\bibitem[DiBenedetto et~al.(2012)DiBenedetto, Gianazza, and
  Vespri]{DiBenedetto2012}
Emmanuele DiBenedetto, Ugo Gianazza, and Vincenzo Vespri.
\newblock \emph{Harnack{\textquotesingle}s Inequality for Degenerate and
  Singular Parabolic Equations}.
\newblock Springer New York, 2012.
\newblock \doi{10.1007/978-1-4614-1584-8}.
\newblock URL \url{https://doi.org/10.1007/978-1-4614-1584-8}.

\bibitem[Ding et~al.(2021)Ding, Zhang, and Zhou]{Ding2021}
Mengyao Ding, Chao Zhang, and Shulin Zhou.
\newblock Local boundedness and {H\"{o}lder} continuity for the parabolic
  fractional p-{Laplace} equations.
\newblock \emph{Calculus of Variations and Partial Differential Equations},
  60\penalty0 (1), January 2021.
\newblock \doi{10.1007/s00526-020-01870-x}.
\newblock URL \url{https://doi.org/10.1007/s00526-020-01870-x}.

\bibitem[Dyda and Kassmann(2020)]{dyda2020regularity}
Bart{\l}omiej Dyda and Moritz Kassmann.
\newblock Regularity estimates for elliptic nonlocal operators.
\newblock \emph{Analysis \& PDE}, 13\penalty0 (2):\penalty0 317--370, 2020.

\bibitem[Düzgün et~al.(2023)Düzgün, Iannizzotto, and
  Vespri]{düzgün2023clustering}
Fatma~Gamza Düzgün, Antonio Iannizzotto, and Vincenzo Vespri.
\newblock A clustering theorem in fractional {Sobolev} spaces.
\newblock \emph{arXiv:2305.19965}, 2023.

\bibitem[Egorov(1911)]{egorov1911fonctions}
D~Egorov.
\newblock Sur les suites des fonctions meaurables.
\newblock \emph{Comptes Rendus de Acad. des Sc. de Paris}, 152:\penalty0
  244--246, 1911.

\bibitem[Felsinger and Kassmann(2013)]{Felsinger2013}
Matthieu Felsinger and Moritz Kassmann.
\newblock Local regularity for parabolic nonlocal operators.
\newblock \emph{Communications in Partial Differential Equations}, 38\penalty0
  (9):\penalty0 1539--1573, September 2013.
\newblock \doi{10.1080/03605302.2013.808211}.
\newblock URL \url{https://doi.org/10.1080/03605302.2013.808211}.

\bibitem[Giacomin et~al.(1998)Giacomin, Lebowitz, and
  Presutti]{giacomin1998deterministic}
Giambattista Giacomin, Joel~L Lebowitz, and Errico Presutti.
\newblock Deterministic and stochastic hydrodynamic equations arising from
  simple microscopic model systems.
\newblock \emph{Mathematical Surveys and Monographs}, 64:\penalty0 107--152,
  1998.

\bibitem[Gianazza and Vespri(2006)]{gianazza2006harnack}
Ugo Gianazza and Vincenzo Vespri.
\newblock A {Harnack} inequality for solutions of doubly nonlinear parabolic
  equations.
\newblock \emph{J. Appl. Funct. Anal}, 1\penalty0 (3):\penalty0 271--284, 2006.

\bibitem[Giaquinta and Modica(1986)]{Giaquinta1986}
Mariano Giaquinta and Giuseppe Modica.
\newblock Remarks on the regularity of the minimizers of certain degenerate
  functionals.
\newblock \emph{Manuscripta Mathematica}, 57\penalty0 (1):\penalty0 55--99,
  March 1986.
\newblock \doi{10.1007/bf01172492}.
\newblock URL \url{https://doi.org/10.1007/bf01172492}.

\bibitem[Ivanov(1992)]{ivanov1992quasilinear}
Aleksandr~Vasil'evich Ivanov.
\newblock Quasilinear parabolic equations that admit double degeneration.
\newblock \emph{Algebra i Analiz}, 4\penalty0 (6):\penalty0 114--130, 1992.

\bibitem[Ivanov(1994)]{ivanov1994holder}
Aleksandr~Vasil'evich Ivanov.
\newblock H\"older estimates for equations of fast diffusion type.
\newblock \emph{Algebra i Analiz}, 6\penalty0 (4):\penalty0 101--142, 1994.

\bibitem[Ivanov and Mkrtychyan(1991)]{ivanov1991regularity}
AV~Ivanov and PZ~Mkrtychyan.
\newblock On the regularity up to the boundary of generalized solutions of the
  first initial-boundary value problem for quasilinear parabolic equations that
  admit double degeneration.
\newblock \emph{Zap. Nauchn. Sem. Leningrad. Otdel. Mat. Inst. Steklov.(LOMI)},
  196:\penalty0 83--98, 1991.

\bibitem[Kassmann(2008)]{Kassmann2008}
Moritz Kassmann.
\newblock A priori estimates for integro-differential operators with measurable
  kernels.
\newblock \emph{Calculus of Variations and Partial Differential Equations},
  34\penalty0 (1):\penalty0 1--21, April 2008.
\newblock \doi{10.1007/s00526-008-0173-6}.
\newblock URL \url{https://doi.org/10.1007/s00526-008-0173-6}.

\bibitem[Kassmann and Weidner(2023)]{kassmann2023}
Moritz Kassmann and Marvin Weidner.
\newblock The parabolic {Harnack} inequality for nonlocal equations, 2023.
\newblock URL \url{https://arxiv.org/abs/2303.05975}.

\bibitem[Kinnunen and Kuusi(2006)]{Kinnunen2006}
Juha Kinnunen and Tuomo Kuusi.
\newblock Local behaviour of solutions to doubly nonlinear parabolic equations.
\newblock \emph{Mathematische Annalen}, 337\penalty0 (3):\penalty0 705--728,
  November 2006.
\newblock \doi{10.1007/s00208-006-0053-3}.
\newblock URL \url{https://doi.org/10.1007/s00208-006-0053-3}.

\bibitem[Kuusi et~al.(2011)Kuusi, Laleoglu, Siljander, and Urbano]{Kuusi2011}
Tuomo Kuusi, Rojbin Laleoglu, Juhana Siljander, and Jos{\'{e}}~Miguel Urbano.
\newblock H\"{o}lder continuity for {Trudinger's} equation in measure spaces.
\newblock \emph{Calculus of Variations and Partial Differential Equations},
  45\penalty0 (1-2):\penalty0 193--229, October 2011.
\newblock \doi{10.1007/s00526-011-0456-1}.
\newblock URL \url{https://doi.org/10.1007/s00526-011-0456-1}.

\bibitem[Kuusi et~al.(2012)Kuusi, Siljander, and Urbano]{Kuusi2012}
Tuomo Kuusi, Juhana Siljander, and Jose~Miguel Urbano.
\newblock Local {H\"older} continuity for doubly nonlinear parabolic equations.
\newblock \emph{Indiana University Mathematics Journal}, 61\penalty0
  (1):\penalty0 399--430, 2012.
\newblock \doi{10.1512/iumj.2012.61.4513}.
\newblock URL \url{https://doi.org/10.1512/iumj.2012.61.4513}.

\bibitem[Leugering and Mophou(2018)]{Leugering2018}
G\"{u}nter Leugering and Gis{\`{e}}le Mophou.
\newblock Instantaneous optimal control of friction dominated flow in a
  gas-network.
\newblock In \emph{Shape Optimization, Homogenization and Optimal Control},
  pages 75--88. Springer International Publishing, 2018.
\newblock \doi{10.1007/978-3-319-90469-6\_5}.
\newblock URL \url{https://doi.org/10.1007/978-3-319-90469-6\_5}.

\bibitem[Liao(2022)]{liaoholder}
Naian Liao.
\newblock H\"{o}lder regularity for parabolic fractional $p$-{Laplacian}, 2022.
\newblock URL \url{https://arxiv.org/abs/2205.10111}.

\bibitem[Liao and Sch\"{a}tzler(2021)]{liaopart3}
Naian Liao and Leah Sch\"{a}tzler.
\newblock On the {H\"{o}lder} regularity of signed solutions to a doubly
  nonlinear equation. part {III}.
\newblock \emph{International Mathematics Research Notices}, 2022\penalty0
  (3):\penalty0 2376--2400, December 2021.
\newblock \doi{10.1093/imrn/rnab339}.
\newblock URL \url{https://doi.org/10.1093/imrn/rnab339}.

\bibitem[Lions(1972)]{lions1972inequations}
Jacques~Louis Lions.
\newblock \emph{Les in{\'e}quations en m{\'e}canique et en physique}.
\newblock Dunod, 1972.

\bibitem[Mahaffy(1976)]{Mahaffy1976}
M.~W. Mahaffy.
\newblock A three-dimensional numerical model of ice sheets: Tests on the
  barnes ice cap, northwest territories.
\newblock \emph{Journal of Geophysical Research}, 81\penalty0 (6):\penalty0
  1059--1066, February 1976.
\newblock \doi{10.1029/jc081i006p01059}.
\newblock URL \url{https://doi.org/10.1029/jc081i006p01059}.

\bibitem[Severini(1910)]{severini1910sulle}
Carlo Severini.
\newblock Sulle successioni di funzioni ortogonali.
\newblock \emph{Atti dell'Accademia Gioenia}, 3 (5) Memoria XIII, 1a, 1910.

\bibitem[Str\"{o}mqvist(2019{\natexlab{a}})]{Strmqvist2019}
Martin Str\"{o}mqvist.
\newblock Harnack{\textquotesingle}s inequality for parabolic nonlocal
  equations.
\newblock \emph{Annales de l{\textquotesingle}Institut Henri Poincar{\'{e}} C,
  Analyse non lin{\'{e}}aire}, 36\penalty0 (6):\penalty0 1709--1745, October
  2019{\natexlab{a}}.
\newblock \doi{10.1016/j.anihpc.2019.03.003}.
\newblock URL \url{https://doi.org/10.1016/j.anihpc.2019.03.003}.

\bibitem[Str\"{o}mqvist(2019{\natexlab{b}})]{Strmqvist2019bdd}
Martin Str\"{o}mqvist.
\newblock Local boundedness of solutions to non-local parabolic equations
  modeled on the fractional p-{Laplacian}.
\newblock \emph{Journal of Differential Equations}, 266\penalty0 (12):\penalty0
  7948--7979, June 2019{\natexlab{b}}.
\newblock \doi{10.1016/j.jde.2018.12.021}.
\newblock URL \url{https://doi.org/10.1016/j.jde.2018.12.021}.

\bibitem[Trudinger(1968)]{Trudinger1968}
Neil~S. Trudinger.
\newblock Pointwise estimates and quasilinear parabolic equations.
\newblock \emph{Communications on Pure and Applied Mathematics}, 21\penalty0
  (3):\penalty0 205--226, May 1968.
\newblock \doi{10.1002/cpa.3160210302}.
\newblock URL \url{https://doi.org/10.1002/cpa.3160210302}.

\bibitem[Vespri(1992)]{vespri1992local}
Vincenzo Vespri.
\newblock On the local behaviour of solutions of a certain class of doubly
  nonlinear parabolic equations.
\newblock \emph{manuscripta mathematica}, 75\penalty0 (1):\penalty0 65--80,
  1992.

\bibitem[Vespri and Vestberg(2022)]{vespri2022extensive}
Vincenzo Vespri and Matias Vestberg.
\newblock An extensive study of the regularity of solutions to doubly singular
  equations.
\newblock \emph{Advances in Calculus of Variations}, 15\penalty0 (3):\penalty0
  435--473, 2022.

\end{thebibliography}
\end{document}